\newtheorem{theorem}{Theorem}[section]
\newtheorem{proposition}[theorem]{Proposition}
\newtheorem{lemma}[theorem]{Lemma}
\newtheorem{corollary}[theorem]{Corollary}
\newtheorem{question}[theorem]{Question}
\theoremstyle{definition}
\newtheorem{remark}[theorem]{Remark}
\newcommand{\what}[1]{\widehat{#1}}
\newcommand{\wbar}[1]{\overline{#1}}
\newcommand{\til}[1]{\tilde{#1}}
\newcommand{\Cee}{\mathbb{C}}
\newcommand{\En}{\mathbb{N}}
\newcommand{\Ree}{\mathbb{R}}
\newcommand{\Tee}{\mathbb{T}}
\newcommand{\Zee}{\mathbb{Z}}
\newcommand{\alp}{\alpha}
\newcommand{\del}{\delta}
\newcommand{\sig}{\sigma}
\newcommand{\fA}{\mathcal{A}}
\newcommand{\fB}{\mathcal{B}}
\newcommand{\fH}{\mathcal{H}}
\newcommand{\fN}{\mathcal{N}}
\newcommand{\fT}{\mathcal{T}}
\newcommand{\fX}{\mathcal{X}}
\newcommand{\norm}[1]{\left\|#1\right\|}
\newcommand{\id}{\mathrm{id}}
\newcommand{\amen}{\mathrm{AM}}
\newcommand{\bl}{\mathrm{L}}
\newcommand{\fal}{\mathrm{A}}
\newcommand{\conj}{\mathrm{Conj}}
\newcommand{\cent}{\mathrm{Z}}
\newcommand{\za}{\mathrm{ZA}}
\newcommand{\vn}{\mathrm{VN}}
\newcommand{\im}{\mathrm{Im}}
\newcommand{\ideal}{\mathrm{I}}
\newcommand{\supp}{\mathrm{supp}}
\newcommand{\spn}{\mathrm{span}}
\newcommand{\spo}{\mathrm{SO}}
\newcommand{\spu}{\mathrm{SU}}
\begin{document}

\title[Amenability properties of $\za(G)$]
{Amenability properties of the central Fourier algebra of a compact group}

\author{Mahmood Alaghmandan and Nico Spronk}

\begin{abstract}
We let the central Fourier algebra, $\za(G)$, be the subalgebra of functions
$u$ in the Fourier algebra  $\fal(G)$ of a compact group, for which
$u(xyx^{-1})=u(y)$ for all $x,y$ in $G$.  We show that this algebra admits bounded point derivations
whenever $G$ contains a non-abelian closed connected subgroup.  Conversely when
$G$ is virtually abelian, then $\za(G)$ is amenable.  Furthermore, for virtually abelian $G$,
we establish which closed ideals admit bounded approximate identities.
We also show that if $\za(G)$ is weakly amenable, in fact hyper-Tauberian,
exactly when $G$ admits no non-abelian connected subgroup.  
We also study the amenability constant of $\za(G)$ for finite $G$
and exhibit totally disconnected groups $G$ for which $\za(G)$ is non-amenable.
In passing, we establish some properties related to spectral synthesis of subsets
of the spectrum of $\za(G)$.
\end{abstract}

\maketitle

\footnote{{\it Date}: \today.

2000 {\it Mathematics Subject Classification.} Primary 43A30, 43A77;
Secondary 43A45, 22D45, 46J40, 46H25.
{\it Key words and phrases.} Fourier algebra, point derivation, amenability, spectral synthesis.}

Let $G$ be a compact group with Fourier algebra $\fal(G)$
and $B$ be a group of continuous automorphisms on $G$.  We let
\[
\cent_B\fal(G)=\bigcap_{\beta\in B}\{u\in\fal(G):u\circ\beta=u\}
\]
and call this the algebra the {\it $B$-centre} of $\fal(G)$.  In particular
we let $\mathrm{Inn}(G)$ be the group of inner automorphisms and let
\[
\za(G)=\cent_{\mathrm{Inn}(G)}\fal(G)
\]
and call this the {\it $G$-centre} of $\fal(G)$, or the {\it central Fourier algebra}
on $G$.  Of course, since $\fal(G)$
is a  commutative algebra, this bears no relation to the centre of $\fal(G)$ as an algebra.

\subsection{Motivation and plan}
We are motivated by the results of \cite{azimifardss}, where amenability
properties of the centre of the group algebra, $\cent\bl^1(G)$, are studied.  We note that
this algebra is densely spanned by idempotents --- i.e.\ normalized characters
of  irreducible representations, $d_\pi\chi_\pi$ --- and hence is automatically weakly amenable,
even hyper-Tauberian (see \cite[Theo.\ 14]{samei}).  Hence those properties were not discussed
for compact $G$.
However, it was shown for $G$ either non-abelian and connected, or an infinite product of 
non-abelian finite groups, that $\cent\bl^1(G)$ is non-amenable.  It is conjectured in that
paper that $\cent\bl^1(G)$ is amenable if and only if if $G$ has an open abelian subgroup.
Neither direction of that conjecture is resolved.

In the present article, we conduct a parallel investigation for $\za(G)$.  Our techniques
are different and some of our conclusions sharper.  In the case that the connected component
of the identity, $G_e$, is non-abelian, we show that $\za(G)$ admits a point derivation;
see Section \ref{sec:pointder}.  In Section \ref{sec:openabel} we show that when $G$
is virtually abelian then $\za(G)$ is amenable.  We are able
to use the structure of the central Fourier algebra for virtually abelian compact groups, 
to further establish that $\za(G)$
is weakly amenable if and only if $G_e$ is abelian.  We invest the extra effort to establish
that this is exactly the case in which $G$ is hyper-Tauberian, a condition identified and studied by
Samei (\cite{samei}).  One of our major tools in this section is the relationship between
these properties and certain conditions related to sets of spectral synthesis.
In Section \ref{sec:amenconst} we investigate the amenability constant 
$\amen(\za(G))$, and show that an infinite product of non-abelian finite groups, $P$
gives a non-amenble algebra $\za(P)$.

In the course of our investigation we gain, in Proposition \ref{prop:finitesynthweaksynth},
some results on spectral synthesis and weak synthesis
of singleton and finite subsets of the spectrum of $\za(G)$, giving partial generlisations of results
of Meany \cite{meaney} and Ricci \cite{ricci}, with quite different proofs.  
We also gain a broad generalisation of an amenability result
of Lasser \cite{lasser}; see Remark \ref{rem:lasser}. We also show in Propositions
\ref{prop:virtabel1} and \ref{prop:virtabel2}, for virtually abelian
compact groups, a full description of ideals in $\za(G)$ admitting bounded approximate identities

\subsection{History}  
It was shown by Johnson (\cite{johnson94}) that $\fal(G)$ may fail to be amenable for a compact $G$.
This led Ruan (\cite{ruan}) to define operator amenability and show that $\fal(G)$
is operator amenable exactly when $G$ is amenable; in particular this holds when $G$ is compact.
One of the most interesting applications of this result is establishing, for amenable $G$,
which closed ideals of $\fal(G)$ admit bounded approximate identites, due to
Forrest, Kaniuth, Lau and Spronk (\cite{forrestkls}).  For abelian $G$, this was established by
Liu, Van Rooij and Wang in \cite{liuvrw}.
That $\fal(G)$ is always operator weakly amenable
was established by Spronk (\cite{spronk0}), and, independantly Samei (\cite{samei0}).  
Both articles relied on identification of sets of spectral synthesis, and, in particular, ideas in the
latter latter helped Samei establish the notion of hyper-Tauberian Banach algebras in \cite{samei}.
The operator space structure of $\za(G)$ is maximal -- see \ref{ssec:zag} -- and hence operator versions
of amenability propeties are automatic.

Forrest and Runde (\cite{forrestr}) established that $\fal(G)$ is amenable exactly when $G$
is virtually abelian, and further that if the connected compenent $G_e$ is abelian, then
$\fal(G)$ is weakly amenable.  Recently, Lee, Ludwig, Samei and Spronk (\cite{leelss})
have established for a Lie group $G$, then $\fal(G)$ can be weakly amenable only
when $G_e$ is abelian.  For compact $G$, this fact was established by
Forrest, Samei and Spronk in \cite{forrestss}.

\section{Preliminaries and notation.}\label{sec:preliminaries}

\subsection{Amenability and weak amenability}
We briefly note some fundamental definitions which go back to \cite{johnson}.

Let $\fA$ be a commutative Banach algebra.  A Banach $\fA$-bimodule is any
Banach space $\fX$ which admits a pair of contractive homomorphisms $a\mapsto (x\mapsto ax)$
and $a\mapsto (x\mapsto xa)$, each from $\fA$ into
bounded operators $\fB(\fX)$, such that the ranges of these maps commute:  $a(xb)=(ax)b$.
The homomorphisms given by pointwise adjoints of these maps make the dual, $\fX^*$, into
a {\it dual Banach $\fA$-module}.  We say $\fA$ is {\it amenable} is every bounded derivation
from $\fA$ into any dual module, i.e.\ $D:\fA\to\fX^*$ with $D(ab)=a(Db)+(Da)b$, is inner, i.e.\
$Da=af-fa$ for some $f$ in $\fX^*$.

Following \cite{badecd}, we say $\fA$ is {\it weakly amenable} if there are no non-zero
bounded derivations for $\fA$ into any {\it symmetric} Banach $\fA$-module, i.e.\ module $\fX$
satisfying $ax=xa$.  In particular, $\fA$ is a symmetric Banach $\fA$-bimodule as is its dual $\fA^*$.
It is sufficient to see that there are no non-zero bounded derivations from $\fA$ into $\fA^*$ to show
that $\fA$ is weakly amenable.  Given a multiplicative functional $\chi$ on $\fA$, a {\it point derivation}
at $\chi$ is any functional $D$ on $\fA$ which satisfies $D(ab)=\chi(a)D(b)+D(a)\chi(b)$.
The map $a\mapsto D(a)\chi:\fA\to\fA^*$ is then a derivation.   Hence a weakly amenable 
algebra admits no bounded point derivations.

\subsection{The central Fourier algebra}\label{ssec:zag}
Throughout this article, with the exception of Section \ref{ssec:virtabelian},
$G$ will denote a compact group.  We let $\what{G}$ denote the set
of (equivalence classes of) continuous irreducible representations.  For $\pi$ in $\what{G}$
we let $\fH_\pi$ denote the space on which it acts and let $d_\pi=\dim\fH_\pi$.  We denote
normalized Haar integration by $\int_G \dots ds$.  For integrable $u:G\to\Cee$ we let
$\hat{u}(\pi)=\int_G u(s)\bar{\pi}(s)\,ds$, and \cite[(34.4)]{hewittrII} provides the following
description of the {\it Fourier algebra}:
\[
u\in\fal(G)\quad\Leftrightarrow\quad
\norm{u}_\fal=\sum_{\pi\in\what{G}}d_\pi\norm{\hat{u}(\pi)}_1<\infty
\]
where $\norm{\cdot}_1$ denotes the trace norm.
This is a special case of the general definition of the Fourier algebra, for locally compact groups, 
given in \cite{eymard}.
We let $\vn(G)=\ell^\infty\text{-}\bigoplus_{\pi\in\what{G}}\fB(\fH_\pi)$ and we have dual identification
$\fal(G)^*\cong\vn(G)$ via
\[
\langle u,T\rangle=\sum_{\pi\in\what{G}}d_\pi\mathrm{Tr}(\hat{u}(\pi)T_\pi).
\]

Let $Z_G:\fal(G)\to\za(G)$ be given by $Z_Gu(x)=\int_G u(sxs^{-1})\, ds$.  This is easily
verified to be a contractive linear projection.  A well-known
consequence of the Schur orthogonality relations is that for $\pi$ in $\what{G}$ and
$\xi,\eta$ in $\fH_\pi$ we have $Z_G\langle\pi(\cdot)\xi|\eta\rangle
=\frac{\langle\xi|\eta\rangle}{d_\pi}\chi_\pi$, where $\chi_\pi$ is the character of 
$\pi$. Hence $\za(G)=\wbar{\spn}\{\chi_\pi:\pi\in\what{G}\}$ .  
Thus, since $\hat{\chi}_\pi(\pi)=\frac{1}{d_\pi}I_{d_\pi}$ and $\hat{\chi}_\pi(\pi')=0$ for $\pi'\not=\pi$,
we have $\norm{\hat{\chi}_\pi(\pi)}_1=1$, and the
description of the norm above gives that
\begin{equation}\label{eq:zanorm}
u\in\za(G)\quad\Leftrightarrow\quad u=\sum_{\pi\in\what{G}}\alp_\pi\chi_\pi\text{ with }
\norm{u}_\fal=\sum_{\pi\in\what{G}}d_\pi|\alp_\pi|<\infty.
\end{equation}
Hence we have that $\za(G)^*\cong\cent\vn(G)=\ell^\infty\text{-}\bigoplus_{\pi\in\what{G}}\Cee I_{d_\pi}
\cong\ell^\infty(\what{G})$.  

In particular, we see that $\za(G)$ is the predual of a commutative von
Neumann algebra.  Thus, generally, we will have no need to discuss the completely bounded 
theory of this space.
However, we shall require, some knowledge of the operator space structure on $\fal(G)$
in Section \ref{sec:amenconst}, which we shall simply reference therein.

Let $\sim_G$ denote the equivalence relation on $G$ by conjugacy, i.e.\
$x\sim_G x'$ if and only if $x'=yxy^{-1}$ for some $y$ in $G$.  It may be deduced from
\cite[(34.37)]{hewittrII} that each element of the Gelfand spectrum of $\za(G)$
is given by evaluation functionals from $G$, and hence may be identified with
a point in $\conj(G)=G/{\sim_G}$.  See also Proposition
\ref{prop:spec}, which gives a generalisation of this result.
It is evident that $Z_G$ has a certain expectation property: $Z_G(uv)=uZ_Gv$ for $u$ in
$\za(G)$ and $v$ in $\fal$.

We observe that $\za(G)$ is actually the {\it hypergroup algebra} $\ell^1(\what{G},d^2)$.
Indeed if we let $\del_\pi=\frac{1}{d_\pi}\chi_\pi$ then we obtain the hypergroup convolution
$\del_\pi\del_{\pi'}=\sum_{\sig\subset\pi\otimes\pi'}\frac{d_\sig}{d_\pi d_{\pi'}}\del_\sig$, where the sum
is given all over irreducible subrepresentations of $\pi\otimes\pi'$, allowing repetitions for multiplicity.
Here the Haar measure is given by $d^2(\{\pi\})=d^2_\pi$.  Notice that $\cent\bl^1(G)$, the algebra
studied in \cite{azimifardss}, is the hypergroup algebra associated with the compact hypergroup
$\conj(G)$.  Moreover, $\what{G}$ and $\conj(G)$ are mutually dual hypergroups.  See 
\cite{jewett} to see that $\what{G}$ is the dual of $\conj(G)$.  That $\conj(G)$ is the dual of
$\what{G}$ was stated in \cite{azimifardss}, but without reference nor proof.  A proof of this fact
will be given by Amini and the first-named author in a forthcoming article.  It combines the facts
that characters on $\za(G)\cong\ell^1(\what{G},d^2)$ are in bijective correspondence with elements 
of the spectrum of $\za(G)$.

Let us end this section with a simple observation on quotient groups.

\begin{proposition}\label{prop:quotient}
Let $N$ be a closed normal subgroup of $G$.  Then $P_Nu(x)=\int_H u(xn)\,dn$ (normalized
Haar integration on $N$) defines a surjective quotient map from $\za(G)$ to $\za(G/N)$.
\end{proposition}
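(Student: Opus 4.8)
The plan is to identify $P_N$ explicitly on the character basis $\{\chi_\pi : \pi \in \what{G}\}$ described in \eqref{eq:zanorm}, and to recognize its range as $\za(G/N)$ via the representation theory that relates $\what{G/N}$ to $\what{G}$. First I would verify that $P_N$ maps into $\za(G/N)$ at all: since $u \in \za(G)$ is conjugation-invariant and $N$ is normal, one checks that $P_Nu$ descends to a function on $G/N$ (using normality of $N$ to absorb the integration variable) and that $P_Nu$ is itself a central function on $G/N$. The cleanest approach is to test $P_N$ against the characters: for $\pi \in \what{G}$, compute $P_N\chi_\pi$. A representation $\pi$ factors through $G/N$ precisely when $N \subseteq \ker\pi$, and the map $\pi \mapsto \pi$ gives a bijection between $\what{G/N}$ and $\{\pi \in \what{G} : N \subseteq \ker\pi\}$. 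The expected computation is that $P_N\chi_\pi = \chi_\pi$ (viewed on $G/N$) when $N \subseteq \ker\pi$, and $P_N\chi_\pi = 0$ otherwise, since integrating $\chi_\pi(xn)$ over $n \in N$ projects onto the $N$-invariant part, which vanishes unless $\pi$ is trivial on $N$.

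Granting this, surjectivity and the quotient-map property follow almost formally. Every character of $G/N$ is $\chi_\pi$ for some $\pi$ with $N \subseteq \ker\pi$, and such $\chi_\pi$ lies in the image of $P_N$ by the computation above; since $\za(G/N) = \wbar{\spn}\{\chi_\pi : \pi \in \what{G/N}\}$ and $P_N$ is continuous, the image is dense, and being the continuous image of a complete space under a projection-like map one argues it is all of $\za(G/N)$. For the norm, using the description in \eqref{eq:zanorm}, if $u = \sum_\pi \alp_\pi \chi_\pi$ then $P_Nu = \sum_{N \subseteq \ker\pi} \alp_\pi \chi_\pi$, and since the summands are a subset of the original and the dimensions $d_\pi$ agree whether $\pi$ is read on $G$ or on $G/N$, one gets $\norm{P_Nu}_{\fal(G/N)} \le \norm{u}_{\fal(G)}$ with equality achievable, so $P_N$ is a contractive quotient map. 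Multiplicativity of $P_N$ as an algebra homomorphism onto $\za(G/N)$ is the statement that it respects the hypergroup convolution, which follows from the fact that $\{\chi_\pi : N \subseteq \ker\pi\}$ is closed under the decomposition of tensor products into irreducibles.

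The main obstacle I anticipate is the bookkeeping in the character computation $P_N\chi_\pi = \chi_\pi \cdot \mathbf{1}_{[N \subseteq \ker\pi]}$, specifically justifying that $\int_N \chi_\pi(xn)\,dn$ equals $\chi_\pi(x)$ when $\pi|_N$ is trivial and vanishes otherwise. This rests on the standard fact that $\int_N \pi(n)\,dn$ is the orthogonal projection onto the $N$-fixed subspace of $\fH_\pi$, which for irreducible $\pi$ is either all of $\fH_\pi$ (when $N \subseteq \ker\pi$) or $\{0\}$; the trace of $\pi(x)$ composed with this projection then gives the claimed dichotomy. The only subtlety is matching the normalizations of Haar measure on $N$, on $G/N$, and on $G$, which I would handle by fixing the convention that makes $P_N$ the composition $\za(G) \hookrightarrow \fal(G) \to \fal(G/N) \to \za(G/N)$ and invoking that averaging over $N$ is the standard restriction-of-Fourier-support map.
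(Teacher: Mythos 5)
Your core argument is the paper's: compute $P_N$ on the character basis, obtaining $P_N\chi_{\pi'}=\chi_\pi\circ q$ when $\pi'=\pi\circ q$ factors through $G/N$ and $P_N\chi_{\pi'}=0$ otherwise, and then read off contractivity and surjectivity from the norm formula (\ref{eq:zanorm}) together with the isometric section $\sum\alp_\pi\chi_\pi\mapsto\sum\alp_\pi\chi_{\pi}\circ q$. Two remarks on the execution. First, the all-or-nothing dichotomy for $\int_N\pi(n)\,dn$ uses normality of $N$ in an essential way: the $N$-fixed subspace of $\fH_\pi$ is $G$-invariant only because $N$ is normal, and only then does irreducibility of $\pi$ force it to be $\{0\}$ or all of $\fH_\pi$; you should say this where you invoke the dichotomy, since for non-normal $N$ it fails. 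Second, ``dense image of a complete space under a continuous map'' does not give surjectivity; what rescues the step is exactly your explicit formula $P_Nu=\sum_{N\subseteq\ker\pi}\alp_\pi\chi_\pi$ together with the isometric section, which shows the image of the closed unit ball is the closed unit ball of $\za(G/N)$.

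The one genuine error is the closing claim that $P_N$ is multiplicative. It is not (for nontrivial $N$): choose $\pi$ with $N\not\subseteq\ker\pi$; then $P_N\chi_\pi=P_N\chi_{\bar{\pi}}=0$, whereas $\chi_\pi\chi_{\bar{\pi}}=|\chi_\pi|^2$ contains the trivial character with multiplicity one, so $P_N(\chi_\pi\chi_{\bar{\pi}})\neq 0$. The fact you cite --- that $\{\chi_{\pi\circ q}:\pi\in\what{G/N}\}$ is closed under tensor-product decomposition --- shows that the section $\za(G/N)\to\za(G)$ is an algebra homomorphism, equivalently that $\wbar{\spn}\{\chi_{\pi\circ q}\}$ is a subalgebra of $\za(G)$; it does not show that the projection annihilating the remaining characters is multiplicative, which would require those remaining characters to span an ideal, and the example above refutes that. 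Fortunately the proposition, read as the paper reads the analogous averaging maps $Z_G$ and $Z_B$ (Sections \ref{ssec:zag} and \ref{ssec:zba}), asserts only a surjective metric quotient of Banach spaces, so your proof stands once this last claim is deleted.
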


\begin{proof}
Since translation is an isometric action on $\fal(G)$, continuous in $G$, it is
clear that $P_N:\za(G)\to\fal(G/N)$ is a contraction.
Let $q:G\to G/N$ be the quotient map.  Then there is an obvious embedding
$\pi\mapsto\pi\circ q:\what{G/N}\to\what{G}$.  An easy calculation shows that
for $\pi'$ in $\what{G}$ we have
\[
P_N\chi_{\pi'}=\begin{cases} \chi_\pi\circ q &\text{if }\pi'=\pi\circ q \\
0 &\text{otherwise.}\end{cases}
\]
See, for example, the proof of \cite[Prop.\ 4.14]{ludwigst}.  Further, 
an application of (\ref{eq:zanorm}) shows that the map
\[
\sum_{\pi\in\what{G/N}}\alp_\pi\chi_\pi\mapsto\sum_{\pi\in\what{G/N}}\alp_\pi\chi_\pi\circ q:
\za(G/N)\to\za(G)
\]
is an isometry whose range is the image of $P_N$.
\end{proof}

\section{Groups with non-abelian connected component}\label{sec:pointder}

We begin with the case of $G=\spu(2)$, the group of $2\times 2$ unitary matrices
of determinant one.  We recall that the conjugacy classes in $\spu(2)$ are determined
by the eigenvlues; i.e.\ for each $x$ in $\spu(2)$ there
is $y$ in $\spu(2)$ for which
\begin{equation}\label{eq:diag}
x=y\begin{bmatrix} \zeta & 0 \\ 0 & \zeta^{-1} \end{bmatrix}y^{-1}\text{ where }
\zeta\text{ in }\Tee\text{ satisfies }\im\zeta\geq 0.
\end{equation}
Hence we may label the conjugacy class by the specified eigenvalue: $C_x=C_\zeta$,
so $C_\zeta=C_{\zeta^{-1}}$.   It is well known that
\[
\what{\spu}(2)=\{\pi_l:l=0,1,2,\dots\}
\]
where $d_{\pi_l}=l+1$ and we have the associated character, evaluated at $x$ as in
(\ref{eq:diag}), given by
\[
\chi_l(x)=\chi_{\pi_l}(C_x)=\chi_{\pi_l}(C_\zeta)
=\sum_{k=0}^l\zeta^{l-2k}=\begin{cases}
\frac{\zeta^{l+1}-\zeta^{-l-1}}{\zeta-\zeta^{-1}} &\text{if }\zeta\in\Tee\setminus\{-1,1\} \\
\zeta^l(l+1) &\text{if }\zeta\in\{-1,1\}.\end{cases}
\]

We now recall that the $3\times 3$ special orthogonal group $\spo(3)$ is isomorphic
to $\spu(2)/\{-I,I\}$ and admits spectrum
$\what{\spo}(3)=\{\pi_l:l=0,2,4,\dots\}\subset\what{\spu}(2).$
The following is a sort of refinement of a result in \cite{chilanar}, which is adapted specifically
for our proof of Theorem \ref{theo:nacc}.

\begin{proposition}\label{prop:suso}
Let $S$ denote either $\spu(2)$ or $\spo(3)$.  
Then $\za(S)$ admits a non-zero point derivation $D_z$ at each class $C_z$ of $S$
for which $z$ is a transcendental element of $\Tee$.
\end{proposition}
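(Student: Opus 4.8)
The plan is to realise the point derivation at $C_z$ as differentiation of the evaluation map along the circle of eigenvalues. Writing $z=e^{i\theta_0}$, where the hypothesis that $z$ is transcendental in particular forces $\theta_0$ to avoid the finitely many degenerate angles ($0,\pi$ for $\spu(2)$, and additionally $\pi/2$ for $\spo(3)$), I set, for $u=\sum_l\alp_l\chi_l$ in $\za(S)$,
\[
D_zu=\frac{d}{d\theta}\Big|_{\theta=\theta_0}u(C_{e^{i\theta}}).
\]
Since evaluation at a fixed class is multiplicative on $\za(S)$ and ordinary differentiation obeys the Leibniz rule, whenever this quantity makes sense it automatically satisfies $D_z(uv)=u(C_z)D_zv+D_zu\,v(C_z)$; that is, $D_z$ is a point derivation at the character given by evaluation at $C_z$. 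So the entire content of the statement is that this recipe defines a bounded, non-zero functional.

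The key step is boundedness, and this is where the combinatorics of the characters must be used carefully. Using the closed form $\chi_l(C_{e^{i\theta}})=\sin((l+1)\theta)/\sin\theta$ recorded above and the normalised basis $\del_{\pi_l}=\frac{1}{l+1}\chi_l$ of $\za(S)\cong\ell^1(\what{S})$, a direct computation gives
\[
\frac{d}{d\theta}\del_{\pi_l}(C_{e^{i\theta}})
=\frac{\cos((l+1)\theta)}{\sin\theta}-\frac{\cos\theta\,\sin((l+1)\theta)}{(l+1)\sin^2\theta}.
\]
The crucial point is the cancellation exhibited by this closed form: although the naive term-by-term bound on $\frac{d}{d\theta}\chi_l$ only yields $O(l^2)$, the Dirichlet-kernel form shows $\frac{d}{d\theta}\chi_l(C_{e^{i\theta}})=O(l)$, uniformly for $\theta$ in a fixed neighbourhood of $\theta_0$ bounded away from $\{0,\pi\}$. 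Hence the sequence $\big(\frac{d}{d\theta}|_{\theta_0}\del_{\pi_l}\big)_l$ is bounded by $|\sin\theta_0|^{-1}+\sin^{-2}\theta_0$, and since $\za(S)^*\cong\ell^\infty(\what{S})$ via this basis, such a bounded sequence defines a bounded functional, with $|D_zu|\le\big(|\sin\theta_0|^{-1}+\sin^{-2}\theta_0\big)\norm{u}_\fal$.

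It remains to check two points. First, that $D_z$ really is the stated derivative and not merely the formal sum $\sum_l\alp_l\,\frac{d}{d\theta}|_{\theta_0}\chi_l$: the uniform bound above, together with $\sum_l(l+1)|\alp_l|=\norm{u}_\fal<\infty$, lets one apply the Weierstrass $M$-test to the differentiated series on a neighbourhood of $\theta_0$, so term-by-term differentiation is valid and $\theta\mapsto u(C_{e^{i\theta}})$ is genuinely differentiable at $\theta_0$ with the asserted derivative. Second, non-vanishing: for $S=\spu(2)$ one computes $D_z\chi_1=-2\sin\theta_0$, and for $S=\spo(3)$ (where only even indices occur) one computes $D_z\chi_2=-4\sin2\theta_0$, both non-zero once $\theta_0$ avoids the degenerate angles. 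I expect the boundedness step — specifically recognising that the apparent $O(l^2)$ growth collapses to $O(l)$ through the Dirichlet-kernel form of $\chi_l$ — to be the main obstacle; once that cancellation is in hand, everything else is routine. The transcendence hypothesis is in fact stronger than this proposition requires (any $z\in\Tee$ with $z\neq\pm1$, and $z\neq\pm i$ in the $\spo(3)$ case, would do), but it is the clean condition carried forward into the proof of Theorem \ref{theo:nacc}.
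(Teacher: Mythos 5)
Your proposal is correct and follows essentially the same route as the paper: the paper defines $D_zu=\left.z\frac{d}{d\zeta}u(C_\zeta)\right|_{\zeta=z}$ on the span of the characters, establishes the bound $|D_z\chi_l|\le\frac{4l+4}{|z-z^{-1}|^2}$ (your $O(l)$ cancellation, in the $\zeta$- rather than $\theta$-parametrization), and matches it against the weight $d_{\pi_l}=l+1$ in the norm (\ref{eq:zanorm}), exactly as you do via the $\ell^1$--$\ell^\infty$ duality. The only substantive difference is that the paper verifies $D_z\chi_l\neq0$ for \emph{every} $l>0$ using transcendence of $z$ (which is what Theorem \ref{theo:nacc} later exploits), whereas you check a single character, which suffices for the proposition as stated and correctly identifies the transcendence hypothesis as stronger than needed here.
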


\begin{proof}
Let $\cent\fT=\spn\{\chi_l:l=0,1,2,\dots\}$ which is dense in $\za(\spu(2))$.
For $u$ in $\cent\fT$ and $z$ in $\Tee$ with $\im z>0$ let 
$D_zu=\left.z\frac{d}{d\zeta}u(C_\zeta)\right|_{\zeta=z}$.  For such $z$ we compute
\[
D_z\chi_l
=\frac{l(z^{l+2}-z^{-l-2})-(l+2)(z^l-z^{-l})}{(z-z^{-1})^2}
\]
This implies that
\[
|D_z\chi_l|\leq\frac{4l+4}{|z-z^{-1}|^2}.   
\]
Moreover, if $z$ is transcendental, then
$D_z\chi_l\not=0$ for any $l>0$.  
Now if $u=\sum_{j=1}^n\alp_j\chi_{l_j}$ for $l_1<\dots<l_n$, we can use the formula
for the norm (\ref{eq:zanorm}) to see that
\[
|D_zu|\leq\sum_{j=1}^n|\alp_j|\frac{4l_j+4}{|z-z^{-1}|^2}=\frac{4}{|z-z^{-1}|^2}\sum_{j=1}^n|\alp_j|(l_j+1)
=\frac{4}{|z-z^{-1}|^2}\norm{u}_\fal.
\]
Hence $D_z$ extends to a continuous derivation on $\za(\spu(2))$.

By virtue of Proposition \ref{prop:quotient} and the identification $\what{\spo}(3)
\subset\what{\spu}(2)$, above, we have
\[
\za(\spo(3))=\wbar{\spn}\{\chi_l:0,2,4,\dots\}\,\til{\subset}\,\za(\spu(2)).
\]
Hence each derivation $D_z$, with transcendental $z$, also defines a non-zero point derivation
on $\za(\spo(3))$.
\end{proof}


\begin{theorem}\label{theo:nacc}
Let $G$ have non-abelian connected component $G_e$.  Then $\za(G)$ admits a non-zero
point derivation.
\end{theorem}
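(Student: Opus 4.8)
The plan is to reduce the general statement to Proposition \ref{prop:suso} by locating a closed copy of $\spu(2)$ or $\spo(3)$ inside $G_e$ and transporting the point derivation $D_z$ along the restriction map. Throughout I fix a transcendental $z\in\Tee$ with $\im z>0$, so that, for $S$ equal to $\spu(2)$ or $\spo(3)$, Proposition \ref{prop:suso} furnishes a non-zero bounded point derivation $D_z$ on $\za(S)$.

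First I would produce the subgroup. By the structure theory of compact connected groups, the closed commutator subgroup $\overline{(G_e,G_e)}$ is a semisimple compact connected group; since $G_e$ is non-abelian it is non-trivial, and it admits a surjection from a product of simple, simply connected compact Lie groups with central (profinite) kernel. Restricting that surjection to a single factor has compact, hence closed, image, yielding a closed non-abelian compact connected Lie subgroup $K\subseteq G_e$ (this step is immediate when $G_e$ is already a Lie group). The Lie algebra $\mathfrak{k}$ is then a non-abelian compact Lie algebra, so it contains a subalgebra isomorphic to $\mathfrak{su}(2)$; integrating the inclusion over the simply connected group $\spu(2)$ gives a homomorphism $\spu(2)\to K$ whose compact, hence closed, image $S$ is isomorphic to $\spu(2)$ or $\spo(3)$. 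Thus $S$ is a closed subgroup of $G_e$, and so of $G$.

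Next I would transport the derivation. Restriction $R\colon u\mapsto u|_S$ is a contractive algebra homomorphism $\fal(G)\to\fal(S)$, and it carries $\za(G)$ into $\za(S)$: if $u(xyx^{-1})=u(y)$ for all $x,y\in G$, the same identity holds for $x,y\in S$, so $u|_S$ is constant on $S$-conjugacy classes. Hence $\delta:=D_z\circ R$ is a bounded functional on $\za(G)$ and, writing $\varphi(u)=u|_S(C_z)=u(z)$ for the character of $\za(G)$ given by evaluation at the $G$-class of $z$, the derivation property of $D_z$ gives
\[
\delta(uv)=D_z\bigl(R(u)R(v)\bigr)=\varphi(u)\,\delta(v)+\delta(u)\,\varphi(v),
\]
so that $\delta$ is a bounded point derivation on $\za(G)$.

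The one substantive point, and the step I expect to be the main obstacle, is to show $\delta\neq 0$, i.e.\ that $R$ does not annihilate the direction along which $D_z$ differentiates; equivalently, that $D_z(\chi_\pi|_S)\neq 0$ for some $\pi\in\what{G}$. Writing $u_\theta\in S$ for the one-parameter family with eigenvalues $e^{\pm i\theta}$, one has $\chi_\pi|_S(C_{e^{i\theta}})=\mathrm{Tr}\,\pi(u_\theta)$, so $\delta\neq0$ is equivalent to the curve $\theta\mapsto[u_\theta]$ in $\conj(G)$ having non-zero velocity at $z$, since the characters $\chi_\pi$ separate the points of $\conj(G)$. This I would deduce from the Ad-invariant orthogonal decomposition $\mathfrak{g}=\mathfrak{z}_{\mathfrak{g}}(z)\oplus[\mathfrak{g},\log z]$ available for compact $G$, in which $[\mathfrak{g},\log z]$ is the tangent space to the conjugacy class of $z$: the velocity $z^{-1}\dot u(z)$ lies in the torus direction $\mathfrak{t}_S\subseteq\mathfrak{z}_{\mathfrak{g}}(z)$ and is non-zero, hence is transverse to the conjugacy-class direction, so the motion is non-trivial in $\conj(G)$ and some $G$-character must vary to first order at $z$. (That $z\neq\pm 1$ is precisely what avoids the forced critical point coming from the Weyl symmetry $\zeta\leftrightarrow\zeta^{-1}$.) With $\delta\neq0$ in hand, $\za(G)$ admits the non-zero bounded point derivation $\delta$, which is the assertion.
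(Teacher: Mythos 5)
Your first two steps --- locating a closed subgroup $S\cong\spu(2)$ or $\spo(3)$ in $G_e$ and transporting $D_z$ along the restriction map --- are exactly the paper's argument and are fine. The gap is in the non-vanishing step, which you correctly identify as the crux but do not actually close. Two problems. First, the theorem concerns an arbitrary compact group $G$, which need not be a Lie group: there is then no Lie algebra $\mathfrak{g}$, no decomposition $\mathfrak{g}=\mathfrak{z}_{\mathfrak{g}}(z)\oplus[\mathfrak{g},\log z]$, and $\conj(G)$ carries no smooth structure, so the tangency argument does not even get started. Second, even for Lie $G$, the principle you invoke --- that a curve whose velocity is transverse to the conjugacy class must make some character vary to first order ``since the characters separate the points of $\conj(G)$'' --- is false in general: separation of points is a $C^0$ statement and does not control derivatives. (Compare $O(2)$ acting on $\Ree^2$: the invariant functions separate orbits, yet every smooth invariant is critical at the origin along any curve through it.) In the group setting the failure mode is precisely a Weyl-type symmetry of $G$ fixing $z$ while moving the direction $\mathfrak{t}_S$ --- the same phenomenon you acknowledge at $z=\pm 1$ in $S$ --- and nothing in your argument rules out its recurrence inside the larger group. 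Tellingly, your non-vanishing argument never uses the transcendence of $z$, yet transcendence is essential: for $z=i$ and a representation with $\pi|_S=\pi_2$ one computes $D_i(\chi_2|_S)=0$, so ``first-order variation'' can genuinely fail at particular algebraic points.

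The paper closes this step with a short algebraic computation that you should substitute for your geometric one. Since $S$ is connected and non-trivial it meets more than one conjugacy class of $G$, so by the separation property some $\chi_\pi|_S$ is non-constant; writing $\pi|_S=\bigoplus_{j=1}^n m_j\pi_{l_j}$ with $l_n>0$, one gets $D_z(\chi_\pi|_S)=\sum_{j=1}^n\sum_{k_j=0}^{l_j}m_j(l_j-2k_j)z^{l_j-2k_j}$, a non-zero integer Laurent polynomial in $z$ with leading term $m_nl_nz^{l_n}$. Its non-vanishing at $z$ is then exactly where the transcendence of $z$ enters, and this works uniformly for every compact $G$, Lie or not.
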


\begin{proof}
According to the proof of \cite[Theo.\ 2.1]{forrestss}, $G_e$ admits a closed subgroup $S$
which is isomorphic to $\spu(2)$ or $\spo(3)$.  [This uses a structure theorem for connected
compact groups --- see \cite{price} --- and the fact that any compact non-abelian Lie algebra
admits a copy of $\mathfrak{su}(2)$.]  Since $\za(G)|_S\subset\fal(G)|_S=\fal(S)$ (see
\cite{herz} or \cite[(34.27)]{hewittrII}, for example), and since for $u$ in $\za(G)$, $u(yxy^{-1})=u(x)$
for $x,y$ in $G$, a fortiori in $S$, we have $\za(G)|_S\subseteq\za(S)$.  Further, since
$S$ is connected and $S\supsetneq\{e\}$, $S$ is contained in more than one conjugacy
class of $G$.  Hence $\za(G)|_S\not\subset\Cee1$, and there is $\pi$ in $\what{G}$ for which
$\chi_\pi|_S\not\in\Cee 1$.  Thus we have
\[
\pi|_S=\bigoplus_{j=1}^nm_j\pi_{l_j}
\]
where each $m_j$ is a non-zero multiplicity and $l_1<\dots<l_n$ with either $n>1$ or $l_1>0$.
It follows that $\chi_\pi|_S=\sum_{j=1}^nm_j\chi_{l_j}$ so
\[
\chi_\pi(C_\zeta)=\sum_{j=1}^nm_j\sum_{k_j=0}^{l_j}\zeta^{l_j-2k_j}
\]
where $C_\zeta$ is the conjugacy class of elements with eigenvelue $\zeta$
in $S$.  Thus for $z$ in $\Tee$ with $\im z>0$ we have for the derivation $D_z$, defined in
the proposition above, that
\[
D_z(\chi_\pi|_S)=\sum_{j=1}^n\sum_{k_j=0}^{l_j}m_j(l_j-2k_j)z^{l_j-2k_j}.
\]
Notice that the above expression is a non-zero polynomial in $z$ of degree $l_n$.
If $z$ is transcendental over rationals, then $D_z(\chi_\pi|_S)\not=0$.
Thus for such $z$, $D=D_z\circ R_S:\za(G)\to\Cee$, where $R_S$ is the restriction map,
is a non-zero point derivation.
\end{proof}

\begin{remark}\label{rem:nacc}
{\bf (i)}  For semisimple compact Lie $G$, a simplification of a result in \cite{ricci} gives bounded
point derivations at all regular points of $G$.  This offers more precise data than
does our Theorem \ref{theo:nacc} for such groups.

{\bf (ii)}  As mentioned in Section \ref{sec:preliminaries}, $\za(G)\cong\ell^1(\what{G},d^2)$
is a discrete hypergroup algebra.
There are other discrete hypergroups, namely certain polynomial hypergroups, whose
hypergroup algebras are known to admit point derivations.  See \cite{lasser}.

{\bf (iii)}  There are other known examples of $B$-central Fourier algebras which admit point derivations.
For example, if $n\geq 3$, then the algebra of radial elements of $\fal(\Ree^n)$,
$\cent_{\spo(n)}\fal(\Ree^n)$, admits a point derivation at each infinite orbit (\cite[2.6.10]{stegemanr}).  
We remark that if we let $H_n=\Ree^n\rtimes\spo(n)_d$, $\Ree^n$ acted upon
by the discretized special orthogonal group, then for odd $n$ we have
$\za(H_n)=\cent_{\spo(n)}\fal(\Ree^n)$, while for even $n$, 
$\za(H_n)=\cent_{\spo(n)/\{\pm I\}}\fal(\Ree^n\rtimes\{\pm I\})$.

A general analysis of algebras $\za(H)$ for locally compact $H$ is beyond the scope of our present
investigation.  For groups with pre-compact conjugacy classes (a class which does not include 
examples $H_n$, above), there are some results for $\cent\bl^1(H)$ in \cite{azimifardss}.
\end{remark}

\section{Virtually abelian groups and groups with abelian connected components}\label{sec:openabel}

\subsection{On sets of synthesis in certain fixed-point subalgebras}\label{ssec:zba}
The purpose of this section is to gather some abstract results which will be useful for understanding
$\za(G)$ for a virtually abelian (locally) compact group $G$, in the next section.

For this section we let $\fA$ denote a commutative, unital, semisimple Banach algebra which is regular
in its Gelfand spectrum $X$.  Given a group of automorphisms $B$ on $\fA$ we let
\[
\cent_B\fA=\bigcap_{\beta\in B}\{u\in\fA:\beta(u)=u\}
\]
denote the fixed point algebra.  If $B$ is compact and acts continuously, we let $Z_B:\fA\to\cent_B\fA$
be given by $Z_Bu=\int_B \beta(u)\,d\beta$ (normalized Haar measure), which
may be understood as a Bochner integral.  It is a surjective quotient map which satisfies the expectation
property $Z_B(uv)=uZ_Bv$ for $u$ in $\cent_B\fA$ and $v$ in $\fA$.

\begin{proposition}\label{prop:spec}
Let $B$ be a compact group of continuous automorphisms on $\fA$.
The Gelfand spectrum of $\cent_B\fA$ is the orbit space $X/B$.
\end{proposition}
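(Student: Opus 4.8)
The plan is to realise the Gelfand spectrum $\Delta$ of $\cent_B\fA$ as a quotient of $X$ under restriction of characters, and then to identify that quotient with $X/B$. First I would note that $B$ acts on $X$ by homeomorphisms via $\beta\cdot x=x\circ\beta^{-1}$, and that running $x$ through the Bochner integral defining $Z_B$ gives the orbit-averaging formula $\widehat{Z_Bu}(x)=\int_B\hat u(\beta\cdot x)\,d\beta$ for $u\in\fA$. In particular each $u\in\cent_B\fA$ has $B$-invariant Gelfand transform, so the restriction map $r\colon X\to\Delta$, $x\mapsto x|_{\cent_B\fA}$ (landing in genuine characters since $1\in\cent_B\fA$ and $x(1)=1$), is constant on $B$-orbits and factors through a continuous map $\bar r\colon X/B\to\Delta$. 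As $\fA$ is unital, $X$ is compact Hausdorff, and since $B$ is compact and acts continuously, $X/B$ is again compact Hausdorff; hence it suffices to show $\bar r$ is a bijection, a continuous bijection of compact Hausdorff spaces being automatically a homeomorphism.

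For surjectivity --- the heart of the argument --- I would take an arbitrary character $\varphi$ of $\cent_B\fA$ and seek $x\in X$ with $x|_{\cent_B\fA}=\varphi$. Let $I$ be the closure in $\fA$ of the ideal generated by $\ker\varphi$. The crucial claim is that $I$ is proper. If instead $1\in I$, then finite sums $\sum_j a_jb_j$ with $a_j\in\fA$ and $b_j\in\ker\varphi$ converge to $1$; applying the contraction $Z_B$ and invoking the expectation property in the form $Z_B(a_jb_j)=b_jZ_B(a_j)$ (valid as $b_j\in\cent_B\fA$) produces elements $\sum_j b_jZ_B(a_j)$ converging to $Z_B1=1$. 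But these all lie in the closed ideal $\ker\varphi$ of $\cent_B\fA$, whence $1\in\ker\varphi$, contradicting $\varphi(1)=1$. Thus $I$ is proper, so $I\subseteq\ker x$ for some $x\in X$ because $\fA$ is unital. Then $\ker\varphi\subseteq\ker x\cap\cent_B\fA=\ker(x|_{\cent_B\fA})$, and since $\ker\varphi$ is maximal in $\cent_B\fA$ and two characters of a unital algebra sharing a kernel agree, we get $\varphi=x|_{\cent_B\fA}=\bar r([x])$.

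For injectivity I would show that points in distinct orbits restrict to distinct characters. If $x'\notin Bx$, then $Bx$ and $Bx'$ are disjoint compact subsets of $X$, so regularity of $\fA$ yields $v\in\fA$ with $\hat v\equiv 1$ on $Bx'$ and $\hat v\equiv 0$ on $Bx$; the orbit-averaging formula then gives $\widehat{Z_Bv}(x')=1$ and $\widehat{Z_Bv}(x)=0$, and since $Z_Bv\in\cent_B\fA$ this separates the two restricted characters. Conversely, invariance of Haar measure makes each orbit average depend only on the orbit, so $r$ is genuinely constant on orbits; combined with surjectivity this establishes that $\bar r$ is a bijection, completing the proof. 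The step I expect to be the main obstacle is the properness of $I$ in the surjectivity argument: it is precisely the expectation property of $Z_B$, and not any (false) multiplicativity, that returns the averaged elements to $\ker\varphi$ and forces the contradiction. A secondary technical point is extracting from regularity the separation of two disjoint compact orbits, which is the standard Urysohn-type property of regular commutative Banach algebras on their spectra.
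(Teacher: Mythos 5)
Your proof is correct, but the surjectivity argument --- which is the substance of the proposition --- runs along a genuinely different line from the paper's. The paper fixes a character $\chi$ of $\cent_B\fA$ and shows its kernel has non-empty hull in $X/B$: assuming some $u_1,\dots,u_n\in\ker\chi$ have no common zero on $X/B$, it uses regularity to build a partition of unity in $\cent_B\fA$ and assembles a pointwise-positive, hence invertible, element $u$ of $\ker\chi$, with $Z_Bu'$ (for $u'$ the inverse in $\fA$) serving as an inverse in $\cent_B\fA$ via the expectation property; a compactness argument then finishes. You instead push $\ker\varphi$ up to the closed ideal $I$ it generates in $\fA$, prove $I$ is proper by applying $Z_B$ and the expectation identity $Z_B(a_jb_j)=b_jZ_B(a_j)$ to any approximation of $1$, and then land in a maximal ideal of the unital algebra $\fA$. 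Both proofs hinge on the same expectation property of $Z_B$, but yours dispenses with regularity entirely for surjectivity (you need it, as does the paper, only to separate distinct orbits, via the compact-set form of regularity that produces $v$ identically $1$ on one orbit and $0$ on the other --- correctly insisting on the value $1$ on the \emph{whole} orbit of $x'$ so that the average does not degenerate). This makes your argument shorter and slightly more general in spirit --- it is the standard fact that characters of the range of a conditional expectation extend to characters of the ambient unital algebra --- whereas the paper's partition-of-unity route stays closer to the classical regular-algebra template of \cite[(34.37)]{hewittrII} that it generalises. Your topological bookkeeping (continuous bijection from the compact space $X/B$ onto the Hausdorff spectrum is a homeomorphism) is also sound.
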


\begin{proof}
Since $X$ is the spectrum of $\fA$, each $\beta$ in $B$ defines an automorphism $\beta^*|_X$
of $X$.  It is clear that the orbit space $X/B=\{B^*x:x\in X\}$, with quotient topology comprises a 
closed subset of the spectrum of $\cent_B\fA$.  The regularity of $\fA$ passes immediately
to the regularity of $\cent_B\fA$ on $X/B$.  Indeed, if $B^*x\not=B^*x'$, there is
$u$ in $\fA$ for which $u|_{B^*x}=1$ and $u|_{B^*x'}=0$.   

Let $\chi$ be any multiplicative character on $\cent_B\fA$.
Suppose that for some $u_1,\dots, u_n$ in $\ker\chi$, $\bigcap_{k=1}^nu_k^{-1}\{0\}\cap
X/B=\varnothing$.  Then for any $x$ in $X$, i.e.\ $B^*x$ in $X/B$, let 
$z_{xk}=\wbar{u_k(x)}$, for $k=1,\dots,n$, so
\[
\sum_{k=1}^n z_{xk}u_k(x')>0\text{ for all }x'\text{ in a neighbourhood in }X/B,\;U_x\text{ of }x.
\]
Let $U_{x_1},\dots, U_{x_m}$ be a finite subcover of $X/B$,  By regularity --- see, for example,
the proof of \cite[(39.21)]{hewittrII} --- there is a partition of unity $w_1,\dots,w_m$ in $\cent_B\fA$
subordinate to the cover $U_{x_1},\dots,U_{x_m}$.  Then
\[
u=\sum_{j=1}^mw_j\sum_{k=1}^nz_{x_jk}u_k\in\ker\chi
\]
and is pointwise positive, hence non-vanishing on $X$. Thus $u$ admits
an inverse $u'$ in $\fA$.  But then $Z_Bu'$ is the inverse of $u$ in $\cent_B\fA$, contradictiong that
$u\in\ker\chi$.
We thus conclude that for any finite family $F\subset\cent_B\fA$, $\bigcap_{u\in F}u^{-1}\{0\}\cap
X/B\not=\varnothing$, and a compactness argument yields that $\bigcap_{u\in\cent_B\fA}u^{-1}\{0\}\cap
X/B\not=\varnothing$.  It follows that $\chi\in X/B$.
\end{proof}

\begin{remark} {\bf (i)}
 If we assume that $\fA$ is conjugate-closed, qua functions on $X$, then for a family
of elements $u_1,\dots, u_n$ for which $\bigcap_{k=1}^nu_k^{-1}\{0\}\cap X/B=\varnothing$, we have
that  $u=\sum_{k=1}^n|u_k|^2$ forms the invertible element in the proof above.  This would allow us 
to circumvent  the partition of unity argument.  See \cite[(34.37)]{hewittrII}.

{\bf (ii)} We can recover the result of \cite{forrest0} that for a compact subgroup $K$ of a locally compact
group $H$, the algebra $\fal(H:K)=\{u\in\fal(H):u(xk)=u(x)\text{ for }u\text{ in }H\text{ and }k\text{ in }K\}$
has spectrum the coset space $G/K$.  Indeed, consider the unitization $\fal(H)\oplus\Cee 1$
and let $K$ act as automorphisms on this algebra by right translation; we obtain $\fal(H:K)$
as a the subalgebra of $\cent_K(\fal(H)\oplus\Cee 1)$ of functions vanishing at infinity.

\end{remark}

Now let $E$ be a closed subset of $X$.  We let
\[
\ideal_\fA(E)=\{u\in\fA:u|_E=0\},\text{ and }
\ideal^0_\fA(E)=\{u\in\fA:\supp u\cap\ E=\varnothing\}.
\]
We say that $E$ is 

$\bullet$ {\it spectral} for $\fA$ if $\wbar{\ideal^0_\fA(E)}=\ideal_\fA(E)$;

$\bullet$ {\it Ditkin} for $\fA$ provided each $u$ in $\ideal_\fA(E)$ satisfies
that $u\in\wbar{u\ideal^0_\fA(E)}$;

$\bullet$ {\it hyper-Ditkin} if $\ideal_\fA^0(E)$ posseses a bounded approximate identity for 
$\ideal_\fA(E)$; and

$\bullet$ {\it approximable} if $\ideal_\fA(E)$ posseses a bounded approximate identity.

\noindent Following \cite{wik}, we will say that $E$ is {\it strongly Ditkin} for $\fA$ if
$\ideal_\fA^0(E)$ posseses a multiplier bounded approximate identity $(u_\alp)$ or $\ideal_\fA(E)$,
i.e.\ so $\sup_\alp\norm{u_\alp u}\leq C\norm{u}$ for all $u$ in $\ideal_\fA(E)$.
Note that a sequential approximate identity is automatically multiplier bounded, thanks to the
uniform boundedness principle.  We shall not require this last notion but mention it only for
comparative purposes.  We have the following implications of properties for a closed subset $E$ of $X$:
\[
\begin{matrix} \text{approximable} \\ \text{\& spectral}\end{matrix}
\;\Rightarrow\;
\text{hyper-Ditkin} 
\;\Rightarrow\;
\begin{matrix} \text{strongly} \\ \text{Ditkin} \end{matrix}
\;\Rightarrow\;
\text{Ditkin}
\;\Rightarrow\;
\text{spectral.}
\]

\begin{remark}\label{rem:belcherr}
It is not the case that approximable implies spectral.  In \cite{blecherr}, a 
remarkable example of a regular squence algebra
$\fA$ is created which admits a contractive approximate identity, but for which the space finitely
supported elements $\fA_c$ is not dense in $\fA$.  In particular, the unitization
$\fA\oplus\Cee 1$ has spectrum the compactification $\En\cup\{\infty\}$, and
hence $\{\infty\}$ is an approximable but non-spectral set for $\fA\oplus\Cee 1$.
\end{remark}

\begin{remark}\label{rem:wik}
A longstanding open question is whether finite unions of spectral sets are spectral.
It is known, due to \cite{warner0} (see also \cite[5.2.1]{kaniuthB}) that a finite union of Ditkin sets is Ditkin.
An easy variant of an argument of \cite{wik} tells us that the same is true for hyper-Ditkin or approximable
sets.  Indeed, if $E$ and $F$ are approximable (respectively, hyper-Ditkin), let
$(u_\alp)$ be a bounded approximate identity for $\ideal_\fA(E)$ and
$(v_\beta)$ one for $\ideal_\fA(F)$ (each contained in $\ideal_\fA^0(E)$, respectively
$\ideal_\fA^0(F)$).  Then $(u_\alp v_\beta)$ (product directed set) is a bounded approximate identity for
$\ideal_\fA(E\cup F)=\ideal_\fA(E)\cap\ideal_\fA(F)$ (contained in $\ideal^0_\fA(E\cup F)=
\ideal_\fA^0(E)\cap\ideal_\fA^0(F)$), as is easily checked.
\end{remark}

We now observe that finite groups of automorphisms preserve certain properties of sets which
are stable under finite unions.

\begin{theorem}\label{theo:zbafinite}
Let $B$ be a finite group of automophisms on $\fA$ and $E$ be a closed subset of $X$
which is Ditkin, hyper-Ditkin or approximable for $\fA$.  Then the subset $B^*E$
of $X/B$ enjoys the same property for $\cent_B\fA$.
\end{theorem}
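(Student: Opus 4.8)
The plan is to factor the statement through the $B$-saturation of $E$ in $X$, namely $BE:=\bigcup_{\beta\in B}\beta^*E$, where $\beta^*$ is the homeomorphism of $X$ induced by $\beta$. Two ingredients are needed: first, that $BE$ enjoys the given property for $\fA$; second, that this property transfers across the averaging map $Z_B=\frac{1}{|B|}\sum_{\beta\in B}\beta$ from $BE\subseteq X$ to $B^*E\subseteq X/B$. The expectation identity $Z_B(uv)=u\,Z_Bv$ for $u\in\cent_B\fA$ and $v\in\fA$ will be the engine of the transfer.

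For the first ingredient I would argue that each $\beta^*E$ has the same property as $E$: since $\beta$ is a bounded algebra automorphism of $\fA$ with bounded inverse, it carries $\ideal_\fA(E)$ and $\ideal^0_\fA(E)$ onto the corresponding ideals at $\beta^*E$, sends bounded approximate identities to bounded approximate identities, and respects closures, so each of the three properties is preserved. As $B$ is \emph{finite}, $BE$ is a finite union of such sets, and Remark~\ref{rem:wik} (together with \cite{warner0} in the Ditkin case) shows that a finite union inherits the property. This is exactly where finiteness of $B$ enters, and it is also why the spectral property is absent from the statement: finite unions of spectral sets are not known to be spectral.

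For the second ingredient I would first record the ideal correspondence. Because an element of $\cent_B\fA$ is $B$-invariant as a function on $X$, it vanishes on $E$ precisely when it vanishes on the saturation $BE$, and its support in $X$ is $B$-saturated and projects onto its support in $X/B$. This yields
\[
\ideal_{\cent_B\fA}(B^*E)=\cent_B\fA\cap\ideal_\fA(BE)=Z_B\bigl(\ideal_\fA(BE)\bigr)
\]
and
\[
\ideal^0_{\cent_B\fA}(B^*E)=\cent_B\fA\cap\ideal^0_\fA(BE)=Z_B\bigl(\ideal^0_\fA(BE)\bigr),
\]
where in each case the equality with the $Z_B$-image uses that $Z_B\beta u$ again vanishes on (respectively has support off) the $B$-invariant set $BE$. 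Granting this, the three cases follow uniformly. If $(e_\lam)$ is a bounded approximate identity for $\ideal_\fA(BE)$ — lying in $\ideal^0_\fA(BE)$ in the hyper-Ditkin case — then for $v\in\ideal_{\cent_B\fA}(B^*E)\subseteq\ideal_\fA(BE)$ the expectation identity gives $v\,Z_Be_\lam=Z_B(v e_\lam)\to Z_Bv=v$, and $(Z_Be_\lam)$ is bounded since $Z_B$ is; hence it is a bounded approximate identity for $\ideal_{\cent_B\fA}(B^*E)$, contained in $\ideal^0_{\cent_B\fA}(B^*E)$ in the hyper-Ditkin case. For the Ditkin case, given $v\in\ideal_{\cent_B\fA}(B^*E)$ choose $w_n\in\ideal^0_\fA(BE)$ with $vw_n\to v$; then $v\,Z_Bw_n=Z_B(vw_n)\to v$ with $Z_Bw_n\in\ideal^0_{\cent_B\fA}(B^*E)$, giving $v\in\wbar{v\,\ideal^0_{\cent_B\fA}(B^*E)}$.

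I expect the main obstacle to be the bookkeeping in the ideal correspondence of the previous paragraph — in particular verifying that $Z_B$ genuinely lands in $\ideal^0_{\cent_B\fA}(B^*E)$, which requires comparing supports taken in $X$ with supports taken in the orbit space $X/B$ and using $B$-invariance of $BE$ so that each translate $\beta^*(\supp u)$ stays disjoint from $BE$. Once this and the expectation identity are in place, the three cases reduce to averaging a bounded approximate identity, and the only conceptual input beyond routine verification is the finite-union stability supplied by Remark~\ref{rem:wik}.
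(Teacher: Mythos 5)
Your proposal is correct and follows essentially the same route as the paper: saturate $E$ to $\bigcup_{\beta\in B}\beta^*E$, invoke Remark~\ref{rem:wik} (finiteness of $B$) to see this finite union inherits the property for $\fA$, and then push approximate identities down via $Z_B$ using the expectation identity $uZ_Bv=Z_B(uv)$. You are somewhat more explicit than the paper about the ideal correspondences $Z_B(\ideal_\fA^{0}(B^*E))=\ideal^{0}_{\cent_B\fA}(B^*E)$ and the support bookkeeping between $X$ and $X/B$, which the paper states without elaboration; this is a welcome clarification rather than a divergence.
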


\begin{proof}
We observe that for each automorphism $\beta$, we have $\beta(\ideal_\fA(E))=\ideal_\fA(\beta^*E)$
and $\beta(\ideal_\fA^0(E))=\ideal_\fA^0(\beta^*E)$.  Hence Remark \ref{rem:wik} shows that
$B^*E=\bigcup_{\beta\in B}\beta^*E$ is Ditkin, hyper-Ditkin or approximable for $\fA$, based
on the respective assumption for $E$.  Furthermore, we observe that for any $u$ in $\fA$ we have
$Z_Bu=\frac{1}{|B|}\sum_{\beta\in B}\beta(u)$, and hence
\[
Z_B\ideal^0_\fA(B^*E)=\ideal_{\cent_B\fA}^0(B^*E)\subseteq\ideal^0_\fA(B^*E)
\]
and the same sequence of inclusions holds for $\ideal_\fA$.
Suppose $u\in\ideal_{\cent_B\fA}(B^*E)$ and $(u_\alp)$ is a net
from $\ideal_\fA^0(E)$ for which $\norm{uu_\alp-u}\overset{\alp}{\longrightarrow}0$.
Then  $uZ_Bu_\alp-u=Z_B(uu_\alp-u)\overset{\alp}{\longrightarrow}0$.  We immediately
see that Ditkenness or hyper-Ditkenness is preserved.  By merely picking $(u_\alp)$
from within $\ideal_\fA(E)$, we see that approximability is preserved.
\end{proof}

\begin{proposition}\label{prop:zbatp}
If the projective tensor product $\fA\hat{\otimes}\fA$ is semisimple, and $B$ is a compact
group of automorphisms on $\fA$, then $\cent_B\fA\hat{\otimes}\cent_B\fA=
\cent_{B\times B}\fA\hat{\otimes}\fA$.
\end{proposition}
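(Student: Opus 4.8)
The plan is to realise both sides of the asserted equality as the range of a single norm-one idempotent on $\fA\hat{\otimes}\fA$. Set $M=\cent_B\fA$ and factor the averaging projection as $Z_B=\iota\circ\rho$, where $\rho\colon\fA\to M$ is $Z_B$ co-restricted to its range and $\iota\colon M\hookrightarrow\fA$ is the isometric inclusion; thus $\rho\circ\iota=\id_M$ while $\iota\circ\rho=Z_B$ is the norm-one idempotent on $\fA$ with range $M$. The group $B\times B$ acts on $\fA\hat{\otimes}\fA$ through the isometric automorphisms $\beta_1\hat{\otimes}\beta_2$, continuously, so the averaging projection
\[
Z_{B\times B}=\int_{B\times B}\beta_1\hat{\otimes}\beta_2\;d(\beta_1,\beta_2)
\]
is a well-defined norm-one idempotent whose range is, by the usual invariance argument, exactly the fixed-point algebra $\cent_{B\times B}(\fA\hat{\otimes}\fA)$.

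The first step is to identify $Z_{B\times B}$ with $Z_B\hat{\otimes}Z_B$. Since both are bounded and elementary tensors are total, it suffices to compare them there, and the product structure of Haar measure on $B\times B$ gives
\[
Z_{B\times B}(u\otimes v)=\Bigl(\int_B\beta_1(u)\,d\beta_1\Bigr)\otimes\Bigl(\int_B\beta_2(v)\,d\beta_2\Bigr)=Z_Bu\otimes Z_Bv=(Z_B\hat{\otimes}Z_B)(u\otimes v).
\]
Hence $\cent_{B\times B}(\fA\hat{\otimes}\fA)$ is precisely the range of the idempotent $Z_B\hat{\otimes}Z_B$ on $\fA\hat{\otimes}\fA$.

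The heart of the proof is the functional-analytic fact that a one-complemented subspace behaves well under $\hat{\otimes}$. By functoriality of the projective tensor product, $\iota\hat{\otimes}\iota$ and $\rho\hat{\otimes}\rho$ are contractions satisfying
\[
(\rho\hat{\otimes}\rho)(\iota\hat{\otimes}\iota)=(\rho\iota)\hat{\otimes}(\rho\iota)=\id_{M\hat{\otimes}M},\qquad
(\iota\hat{\otimes}\iota)(\rho\hat{\otimes}\rho)=(\iota\rho)\hat{\otimes}(\iota\rho)=Z_B\hat{\otimes}Z_B.
\]
The first identity shows $\iota\hat{\otimes}\iota$ is a contraction with a contractive left inverse, hence isometric: for $w\in M\hat{\otimes}M$ one has $\norm{w}=\norm{(\rho\hat{\otimes}\rho)(\iota\hat{\otimes}\iota)w}\leq\norm{(\iota\hat{\otimes}\iota)w}\leq\norm{w}$, so equality holds throughout. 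The second identity exhibits the idempotent $Z_B\hat{\otimes}Z_B$ as factoring through $M\hat{\otimes}M$, with range exactly $(\iota\hat{\otimes}\iota)(M\hat{\otimes}M)$. Since $\iota\hat{\otimes}\iota$ is also an algebra homomorphism on elementary tensors, chaining these facts makes $\iota\hat{\otimes}\iota$ an isometric algebra isomorphism of $\cent_B\fA\hat{\otimes}\cent_B\fA$ onto $\cent_{B\times B}(\fA\hat{\otimes}\fA)$, which is the claim.

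The main obstacle I anticipate is precisely this last isometric identification: subspace inclusions are in general badly behaved for the projective tensor norm, so one must use the complementation---the existence of the contractive projection $Z_B$---rather than mere inclusion, and the left-inverse computation above is exactly what forces the two norms to agree. The remaining hypotheses play supporting roles: semisimplicity of $\fA\hat{\otimes}\fA$ keeps us inside the regular, semisimple framework of this section, so that $\cent_{B\times B}(\fA\hat{\otimes}\fA)$ is genuinely the fixed-point function algebra on the orbit space of the spectrum $X\times X$; and the verifications that $B\times B$ acts by continuous isometric automorphisms and that $Z_{B\times B}=Z_B\hat{\otimes}Z_B$ are routine.
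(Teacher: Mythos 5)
Your proposal is correct and follows essentially the same route as the paper: the authors' (much terser) proof likewise rests on the two facts that $Z_B$ being a norm-one projection/quotient map forces $\cent_B\fA\hat{\otimes}\cent_B\fA$ to embed isometrically in $\fA\hat{\otimes}\fA$, and that $Z_B\otimes Z_B=Z_{B\times B}$. You have simply written out the details (the factorization $Z_B=\iota\circ\rho$ and the left-inverse argument) that the paper leaves implicit.
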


\begin{proof}
Since $Z_B$ is a quotient map, $\cent_B\fA\hat{\otimes}\cent_B\fA$ is isometrically
a subspace of $\fA\hat{\otimes}\fA$.  Moreover, $Z_B\otimes Z_B=Z_{B\times B}$.
\end{proof}

Suppose $\fA\hat{\otimes}\fA$ is semisimple.  With our assumptions $\fA\hat{\otimes}\fA$
is regular on its spectrum $X\times X$ (\cite{tomiyama}).
Then, following \cite[Theo.\ 6]{samei}, we call $\fA$ {\it hyper-Tauberian} if the diagonal
\[
X_D=\{(x,x):x\in X\}
\]
is spectral for $\fA\hat{\otimes}\fA$.  It is a well-known interpretation of the splitting
result of \cite{helemskii} (see also \cite{curtisl}) that approximability of $X_D$ for
$\fA\hat{\otimes}\fA$ is equivalent to amenability of $\fA$.  We further note that
for a compact group of automorphisms on $\fA$ that
\[
(X/B)_D=\{(B^*x,B^*x):x\in X\}=(B\times B)^*X_D.
\]
These comments combine with the last two results to give us the following.

\begin{corollary}\label{cor:zbafinaht}
Suppose $\fA\hat{\otimes}\fA$ is semisimple and let $B$ be a finite group of continuous automorphisms 
on $\fA$.

{\bf (i)}  If $X_D$ is Ditkin for $\fA\hat{\otimes}\fA$, then $\cent_B\fA$ is hyper-Tauberian.

{\bf (ii)} If $\fA$ is amenable, then $\cent_B\fA$ is amenable.
\end{corollary}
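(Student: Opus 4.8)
The plan is to assemble this corollary almost entirely from the three preceding results, since the real work has already been done in Theorem \ref{theo:zbafinite} and Proposition \ref{prop:zbatp}. The key conceptual point is that both hyper-Tauberianness and amenability have been reformulated in the excerpt as \emph{synthesis-type properties of the diagonal} $X_D$ for the projective tensor product: hyper-Tauberian means $X_D$ is spectral for $\fA\hat{\otimes}\fA$, and (via the splitting result of \cite{helemskii}) amenability of $\fA$ is equivalent to approximability of $X_D$ for $\fA\hat{\otimes}\fA$. So the whole strategy is to transport these two properties of $X_D$ from $\fA\hat{\otimes}\fA$ down to $\cent_B\fA\hat{\otimes}\cent_B\fA$ by applying the finite-group transfer theorem.

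First I would set up the tensor-square picture. By Proposition \ref{prop:zbatp}, under the semisimplicity hypothesis we have the identification $\cent_B\fA\hat{\otimes}\cent_B\fA=\cent_{B\times B}(\fA\hat{\otimes}\fA)$, which realizes the tensor square of the fixed-point algebra as a fixed-point subalgebra of $\fA\hat{\otimes}\fA$ under the finite group $B\times B$. Since $B$ is finite, $B\times B$ is finite, so Theorem \ref{theo:zbafinite} applies to $\fA\hat{\otimes}\fA$ with the automorphism group $B\times B$. The remaining ingredient is the orbit computation already recorded in the excerpt, namely $(X/B)_D=(B\times B)^*X_D$, which says precisely that the diagonal of the orbit space is the $(B\times B)$-orbit of the original diagonal. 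Thus the set to which we must apply the transfer theorem is exactly $(B\times B)^*X_D$, living in $(X\times X)/(B\times B)$, which is the spectrum of $\cent_{B\times B}(\fA\hat{\otimes}\fA)$ by Proposition \ref{prop:spec}.

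For part (i): assuming $X_D$ is Ditkin for $\fA\hat{\otimes}\fA$, Theorem \ref{theo:zbafinite} (Ditkin case) gives that $(B\times B)^*X_D$ is Ditkin, hence in particular spectral, for $\cent_{B\times B}(\fA\hat{\otimes}\fA)=\cent_B\fA\hat{\otimes}\cent_B\fA$. But $(B\times B)^*X_D=(X/B)_D$ is the diagonal of the spectrum $X/B$ of $\cent_B\fA$, so by definition $\cent_B\fA$ is hyper-Tauberian. For part (ii): amenability of $\fA$ is equivalent to approximability of $X_D$ for $\fA\hat{\otimes}\fA$; applying the approximable case of Theorem \ref{theo:zbafinite} shows $(X/B)_D$ is approximable for $\cent_B\fA\hat{\otimes}\cent_B\fA$, which by the same splitting equivalence means $\cent_B\fA$ is amenable.

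The one point that deserves care, rather than a genuine obstacle, is checking that the regularity and semisimplicity hypotheses needed to invoke the synthesis machinery do in fact descend to the relevant algebras. Specifically one must confirm that $\fA\hat{\otimes}\fA$ being semisimple and regular on $X\times X$ (\cite{tomiyama}) suffices to run Proposition \ref{prop:spec} and Theorem \ref{theo:zbafinite} for $\cent_{B\times B}(\fA\hat{\otimes}\fA)$, and that the identification $\cent_B\fA\hat{\otimes}\cent_B\fA=\cent_{B\times B}(\fA\hat{\otimes}\fA)$ is compatible with the two characterizations of hyper-Tauberianness and amenability. I expect these to be routine once the bookkeeping of spectra is made explicit, so the corollary follows by simply chaining the three cited results together with the orbit identity for the diagonal.
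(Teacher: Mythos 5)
Your proposal is correct and is essentially identical to the paper's argument: the paper proves the corollary by exactly the same chaining of Proposition \ref{prop:zbatp}, Theorem \ref{theo:zbafinite} applied to $\fA\hat{\otimes}\fA$ with the finite group $B\times B$, the orbit identity $(X/B)_D=(B\times B)^*X_D$, and the characterizations of hyper-Tauberianness and amenability via spectrality and approximability of the diagonal. Your added remarks on checking that semisimplicity and regularity descend to the relevant algebras are sound and only make explicit what the paper leaves implicit.
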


We observe that (ii), above, follows from a more general result of \cite{kepert}. 

We shall say that a closed subset $E$ of $X$ is {\it weakly spectral} for $\fA$ if there is
a fixed $n>0$ for which $\ideal_\fA(E)^n=\{u^n:u\in\ideal_\fA(E)\}\subseteq\wbar{\ideal^0_\fA(E)}$.
We let the characteristic of $E$ with respect to $\fA$, $\xi_\fA(E)$, denote the minimal such $n$, 
so $\xi(E)=1$ if $E$ is spectral.  These concepts were introduced in \cite{warner}.

\begin{proposition}\label{prop:weaksyn}
Suppose $B$ is a compact group of automorphisms on $\fA$ and $E=B^*E$ be weakly spectral
for $\fA$.  Then $E$ is weakly spectral for $\cent_B\fA$ (with $\xi_{\cent_B\fA}(E)\leq\xi_\fA(E)$).
\end{proposition}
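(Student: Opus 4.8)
The plan is to reproduce the averaging scheme from the proof of Theorem~\ref{theo:zbafinite}, but with the approximate-identity bookkeeping replaced by the weak-spectral estimate while keeping track of the characteristic. Fix $n=\xi_\fA(E)$ and take an arbitrary $u\in\ideal_{\cent_B\fA}(E)$. By Proposition~\ref{prop:spec} the spectrum of $\cent_B\fA$ is $X/B$, and since $E=B^*E$ is $B$-invariant, the vanishing of $u$ on $E\subseteq X/B$ amounts to the vanishing of $u$, viewed as a function on $X$, on $E\subseteq X$; hence $u\in\ideal_\fA(E)$ as well. Weak spectrality of $E$ for $\fA$ then yields $u^n\in\wbar{\ideal^0_\fA(E)}$, so I may choose a net $(v_\alpha)$ in $\ideal^0_\fA(E)$ with $\norm{v_\alpha-u^n}\to0$.

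The second step is to average. Applying the contraction $Z_B$ gives $\norm{Z_Bv_\alpha-Z_Bu^n}\to0$, and since $u$, hence $u^n$, lies in $\cent_B\fA$ and is therefore fixed by $Z_B$, we have $Z_Bu^n=u^n$. Granting the support claim below, each $Z_Bv_\alpha$ lies in $\ideal^0_{\cent_B\fA}(E)$, so $u^n\in\wbar{\ideal^0_{\cent_B\fA}(E)}$. As $u$ was arbitrary, this gives $\ideal_{\cent_B\fA}(E)^n\subseteq\wbar{\ideal^0_{\cent_B\fA}(E)}$, i.e.\ $E$ is weakly spectral for $\cent_B\fA$ with $\xi_{\cent_B\fA}(E)\leq n=\xi_\fA(E)$.

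The one genuinely delicate point --- and the step I expect to be the main obstacle, since here $B$ is only assumed compact rather than finite as in Theorem~\ref{theo:zbafinite} --- is the containment $Z_B\ideal^0_\fA(E)\subseteq\ideal^0_{\cent_B\fA}(E)$. That $Z_Bv\in\cent_B\fA$ is automatic, so the problem is entirely one of supports, and the finite union of supports used before is no longer available. I would handle it as follows. Since $\fA$ is unital, $X$ is compact, so $\supp v$ is compact; writing each $\beta$ as $v\mapsto v\circ\beta^*$ for the induced homeomorphism $\beta^*$ of $X$, the saturation $B^*\supp v=\{\beta^*x:\beta\in B,\ x\in\supp v\}$ is the continuous image of the compact set $B\times\supp v$, hence compact. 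Because $E=B^*E$ is $B$-invariant and $\supp v\cap E=\varnothing$, no point $\beta^*x$ with $x\in\supp v$ can lie in $E$, so $B^*\supp v\cap E=\varnothing$. Finally $Z_Bv(x)=\int_B v(\beta^*x)\,d\beta$ vanishes whenever $\beta^*x\notin\supp v$ for every $\beta\in B$, i.e.\ whenever $x\notin B^*\supp v$ (using that $B$ is a group, so the family $\{\beta^*\}$ is closed under inversion); thus $\supp(Z_Bv)\subseteq B^*\supp v$ is disjoint from $E$. As both this support and $E$ are $B$-saturated, the disjointness descends to the orbit space $X/B$, giving $Z_Bv\in\ideal^0_{\cent_B\fA}(E)$, as needed.
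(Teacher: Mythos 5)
Your proof is correct and follows essentially the same route as the paper: observe that $\ideal_{\cent_B\fA}(E)\subseteq\ideal_\fA(E)$, apply weak spectrality of $E$ for $\fA$ to get $u^{\xi_\fA(E)}\in\wbar{\ideal^0_\fA(E)}$, and then average by the contraction $Z_B$, which fixes $u^{\xi_\fA(E)}$. The only point of divergence is the support containment $Z_B\ideal^0_\fA(E)\subseteq\ideal^0_{\cent_B\fA}(E)$: you bound $\supp(Z_Bv)$ by the compact saturation $B^*\supp v$, which is disjoint from the $B$-invariant set $E$, whereas the paper separates $\supp v$ from $E$ by a $B$-invariant open neighbourhood via a nested-intersection compactness argument; both are valid and yours is arguably the more direct.
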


\begin{proof}
It is evident that $Z_B\ideal_\fA(E)=\ideal_{\cent_B\fA}(E)$.  It is also true that
$Z_B\ideal_\fA^0(E)=\ideal_{\cent_B\fA}^0(E)$.  Indeed, if $u\in\ideal_\fA^0(E)$,
then $\supp u\cap E=\varnothing$.  Then there there is open $U=B^*U\supset E$
such that $\supp u\cap\wbar{U}=\varnothing$.  If not, then
\[
\supp u\cap E=\supp u\cap\bigcap\{\wbar{U}:U=B^*U\text{ open, }U\supset E\}\not=\varnothing
\]
violating our initial assumption.  Thus it follows that $\supp(Z_Gu)\cap E=\varnothing$.

We note that $\ideal_{\cent_B\fA}(E)\subset\ideal_\fA(E)$.
Hence if $E$ is weakly spectral for $\fA$, then for $u\in\ideal_{\cent_B\fA}(E)$,
$u^{\xi_\fA(E)}\in\wbar{\ideal_\fA^0(E)}$.  But 
\[
u^{\xi_\fA(E)}=Z_Bu^{\xi_\fA(E)}
\in Z_B\wbar{\ideal_\fA^0(E)}\subseteq\wbar{Z_B\ideal_\fA^0(E)}=\wbar{\ideal_{\cent_B\fA}^0(E)}.
\]
Hence $\xi_{\cent_B\fA}(E)\leq\xi_\fA(E)$.
\end{proof}

\subsection{Virtually abelian groups}\label{ssec:virtabelian}
We say a locally comapct group is virtually abelian if it admits an abelian subgroup of finite index.  In the case
of a compact $G$, this is equivalent to having an open abelian subgroup.

\begin{theorem}\label{theo:va}
Let $G$ be virtually abelian.  Then there exists a normal open abelian subgroup $T$.
We then have that the $T$-centre, i.e.\ when $T$ acts on $G$ by inner automorphisms, is given by
an isomorphic identification
\[
\cent_T\fal(G)=\bigoplus_{aT\in G/T}\fal(aT:R_a)
\]
where $R_a=R_{aT}$ is a closed subgroup of $T$ and $\fal(aT:R_a)=\{u\in1_{aT}\fal(G):
u(atr)=u(at)\text{ for }t\text{ in }T\text{ and }r\text{ in }R_a\}$.
The algebra $\cent_T\fal(G)$ admits spectrum $X=\bigsqcup_{aT\in G/T}aT/R_a$.
We have
\[
\za(G)=\cent_{G/T}\cent_T\fal(G)
\]
where the action of $G$, i.e.\ of $G/T$, on an element of $X$ is given by 
$bT\cdot atR_a=bab^{-1}btb^{-1}R_{bab^{-1}}$.  
For each $a$ in $G$ and $t$ in $T$, we have conjugacy class
\[
C_{at}=\{bab^{-1}btb^{-1}r:b\in G\text{ and }r\in R_{bab^{-1}T}\}.
\]
\end{theorem}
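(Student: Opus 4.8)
The plan is to establish the five assertions of the theorem in the order stated, building the structure of $\cent_T\fal(G)$ coset by coset and then averaging over the finite quotient $G/T$. First I would produce $T$. Since $G$ is compact and virtually abelian it contains an open abelian subgroup $A$, which is automatically of finite index; I would take $T$ to be the normal core $\bigcap_{b\in G}bAb^{-1}$. As $A$ has only finitely many conjugates, $T$ is a finite intersection of open finite-index subgroups, hence open, normal and of finite index, and being contained in $A$ it is abelian. Because $T$ is open and normal, the cosets $aT=Ta$ are clopen and constitute the finite partition $G/T$; the indicators $1_{aT}$ lie in $\fal(G)$, being pulled back along $G\to G/T$ from $\fal(G/T)$, and $\fal(G)=\bigoplus_{aT\in G/T}1_{aT}\fal(G)$ as an algebra. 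Normality gives $sas^{-1}\in aT$ for $s\in T$, so conjugation by $T$ preserves each coset; hence $Z_T$ respects the decomposition and $\cent_T\fal(G)=\bigoplus_{aT}\cent_T\bigl(1_{aT}\fal(G)\bigr)$.

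Next I would identify each summand. Parametrising the coset by $t\mapsto at$ and using that $T$ is abelian and normal, one computes $s(at)s^{-1}=at\,\vphi_a(s)$, where $\vphi_a(s)=a^{-1}sas^{-1}$ is a continuous homomorphism $T\to T$ depending only on the coset (indeed $\vphi_{at_0}=\vphi_a$ for $t_0\in T$). Setting $R_a=R_{aT}=\wbar{\vphi_a(T)}$, a closed subgroup of $T$, the conjugation-invariance of a function $u$ supported on $aT$ is exactly the requirement $u(at)=u(atr)$ for $r$ in the image of $\vphi_a$, and by continuity for all $r\in R_a$; this is precisely membership in $\fal(aT:R_a)$, yielding the stated identification. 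The spectrum of each summand is then $aT/R_a$ by the coset form of the fixed-point spectrum computation (the remark following Proposition \ref{prop:spec}, recovering \cite{forrest0}), so the spectrum of $\cent_T\fal(G)$ is the disjoint union $X$.

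For the fourth assertion, since $\za(G)=\cent_G\fal(G)$ and conjugation by $T$ already acts trivially on $\cent_T\fal(G)$, the residual conjugation action of $G$ factors through the finite group $G/T$, giving $\za(G)=\cent_{G/T}\cent_T\fal(G)$. To read off the action on $X$ I would conjugate a representative $at$ by $b$: since $b(at)b^{-1}=(bab^{-1})(btb^{-1})$ lands in the coset $bab^{-1}T$ with $T$-part $btb^{-1}$, modulo $R_{bab^{-1}}$ it is the point $bab^{-1}btb^{-1}R_{bab^{-1}}$, matching the stated formula; well-definedness in the representative $b$ (modulo $T$) and in $at$ (modulo $R_a$) follows from $\vphi_{at_0}=\vphi_a$ and the homomorphism properties of $\vphi$. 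Proposition \ref{prop:spec} then identifies $\conj(G)$ with the orbit space $X/(G/T)$.

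Finally, the description of $C_{at}$ must capture not only the orbit $\{(bab^{-1})(btb^{-1}):b\in G\}$ of literal conjugates but also the fact that within each coset the class is saturated by a full $R$-coset. To produce the extra factor I would conjugate $x=(bab^{-1})(btb^{-1})$, itself conjugate to $at$, by $s\in T$: the computation above gives $sxs^{-1}=x\,\vphi_{bab^{-1}}(s)$, so $xr$ is conjugate to $at$ for every $r$ in the image of $\vphi_{bab^{-1}}$ and, conjugacy classes in a compact group being closed, for every $r\in R_{bab^{-1}}$; the reverse inclusion is the case $r=e$. I expect the main obstacle to be the bookkeeping in the second step, namely verifying cleanly that the conjugation fixed-point condition on $aT$ is equivalent to right $R_a$-invariance — in particular passing from the possibly non-closed image $\vphi_a(T)$ to its closure $R_a$, and confirming that $Z_T$ identifies the fixed-point space with all of $\fal(aT:R_a)$ rather than a proper subalgebra. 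This closure-and-continuity step recurs in the conjugacy-class argument and should be isolated once and reused.
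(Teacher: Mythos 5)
Your proposal is correct and follows essentially the same route as the paper: construct $T$ as a finite intersection of conjugates of an open abelian subgroup, compute $s(at)s^{-1}=at\cdot\vphi_a(s)$ to identify the $T$-fixed functions on each coset with $\fal(aT:R_a)$, pass to the residual $G/T$-action, and obtain $C_{at}$ by composing conjugation by $b$ with conjugation by $T$. The only caveat you raise --- that $\vphi_a(T)$ might fail to be closed --- is a non-issue, since $\vphi_a$ is a continuous homomorphism on the compact group $T$ and its image is therefore already a closed subgroup; the paper takes $R_a=\{s^{-1}a^{-1}sa:s\in T\}$ with no closure.
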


\begin{proof}
Let $S$ be an open abelian subgroup and $L$ a left transversal for $S$ in $G$.  Then
\[
T=\bigcap_{b\in L}bSb^{-1}
\]
is an open normal subgroup.  Indeed, $L$ is finite and $T$ is an intersection of finitely may open
subgroups.  Furthermore, the definition of $T$ is independent of choice of transversal and for any $a$
in $G$, $aL$ is another transversal, hence $aTa^{-1}=\bigcap_{b\in aL}bSb^{-1}=T$.

Now if $a$ in $G$ and $t$ in $T$ are fixed, then for $s$ in $T$ we have
\[
sats^{-1}=a(a^{-1}sa)ts^{-1}=ats^{-1}(a^{-1}sa).
\]
Since $T$ is abelian and compact we have that
\[
R_a=\{s^{-1}a^{-1}sa:s\in T\}
\]
is a closed subgroup of $T$.  Observe that for $a$ and $a'$ in $G$, if $Ta=Ta'$, then
$R_a=R_{a'}$. Hence we may write $R_{Ta}=R_{aT}=R_a$.
We recall that $\fal(G)=\bigoplus_{aT\in G/T}\fal(aT)$ where $\fal(aT)=1_{aT}\fal(G)\cong\fal(T)$
and we obtain the desired form for $\cent_T\fal(G)$ and its spectrum $X=G/{\sim_T}$.  

It is evident that $\za(G)\subseteq \cent_T\fal(G)$, and the action of $G$ by inner automorphisms on
$G/{\sim_T}$ is really an action by $G/T$.  In fact of we let
$Z_Tu(x)=\int_T u(sxs^{-1})\,ds$, then the Weyl integral formula tells us that
$Z_G=Z_G\circ Z_T=Z_{G/T}\circ Z_T$.  Hence we gain the desired realization of $\za(G)$.

To see the action of $G$ on $X$, and hence the structure of the conjugacy class $C_{at}$,
we fix $a$ and $t$ as above, and for $b$ in $G$ and $s$ in $T$ we have
\begin{align*}
bab^{-1}&(btb^{-1})[bs^{-1}b^{-1}(ba^{-1}b^{-1}bsb^{-1}bab^{-1})] \\
&=bab^{-1}(ba^{-1}b^{-1}bsb^{-1}bab^{-1})(btb^{-1})bs^{-1}b^{-1}
=bs(at)(bs)^{-1}.
\end{align*}
Since each $bsb^{-1}$ is a generic element of $T$, we get the desired result.
\end{proof}

\begin{theorem}\label{theo:vahta}
If $G$ is virtually abelian, then $\za(G)$ is hyper-Tauberian and amenable.
\end{theorem}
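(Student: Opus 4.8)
The plan is to apply Corollary \ref{cor:zbafinaht} to the algebra $\fA=\cent_T\fal(G)$ equipped with the finite automorphism group $B=G/T$ acting by inner automorphisms. Indeed, Theorem \ref{theo:va} furnishes a normal open abelian subgroup $T$ together with the identification $\za(G)=\cent_{G/T}\cent_T\fal(G)$, and since $T$ is open in the compact group $G$ the quotient $G/T$ is finite. It therefore suffices to verify, for $\fA=\cent_T\fal(G)$, the three hypotheses governing Corollary \ref{cor:zbafinaht}: that $\fA\hat{\otimes}\fA$ is semisimple, that $\fA$ is amenable, and that the diagonal $X_D$ is Ditkin for $\fA\hat{\otimes}\fA$. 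This moves the entire burden off the possibly intricate conjugation action of $G/T$ and onto the structurally transparent fixed-point algebra $\cent_T\fal(G)$.

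To carry this out I would first exploit the decomposition from Theorem \ref{theo:va},
\[
\cent_T\fal(G)=\bigoplus_{aT\in G/T}\fal(aT:R_a),
\]
a \emph{finite} direct sum in which each summand is isometrically isomorphic to the Fourier algebra $\fal(T/R_a)$ of the compact \emph{abelian} group $T/R_a$ (so that $\fA$ is a unital, commutative, semisimple, regular Banach algebra, as required by the standing hypotheses). Amenability is then immediate: each $\fal(T/R_a)$ is amenable (being, by \cite{forrestr}, or directly as the group algebra of the discrete abelian group $\what{T/R_a}$, an amenable algebra), and amenability passes to finite direct sums; hence $\cent_T\fal(G)$ is amenable and Corollary \ref{cor:zbafinaht}(ii) yields that $\za(G)$ is amenable.

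For the remaining two points I would pass to the tensor square, which splits as
\[
\cent_T\fal(G)\hat{\otimes}\cent_T\fal(G)=\bigoplus_{aT,\,a'T}\fal(aT:R_a)\hat{\otimes}\fal(a'T:R_{a'}).
\]
Because all groups in sight are abelian, each block is identified with $\fal\bigl((T/R_a)\times(T/R_{a'})\bigr)$ via $\bl^1(\what{X})\hat{\otimes}\bl^1(\what{Y})\cong\bl^1(\what{X}\times\what{Y})$, which is semisimple; a finite direct sum of semisimple algebras is semisimple, settling the first hypothesis. The spectrum of $\cent_T\fal(G)$ is the disjoint union $X=\bigsqcup_{aT}aT/R_a$, so $X_D$ is carried entirely by the diagonal blocks: it meets each block with $a'T\neq aT$ in the empty set (trivially Ditkin), while inside $\fal\bigl((T/R_a)\times(T/R_a)\bigr)$ it is exactly the diagonal subgroup of $(T/R_a)\times(T/R_a)$. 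Invoking the classical fact that a closed subgroup is a Ditkin set for the Fourier algebra of a compact abelian group (cf.\ \cite{liuvrw}), each such piece is Ditkin; since both the tensor square and $X_D$ decompose blockwise, $X_D$ is Ditkin for $\cent_T\fal(G)\hat{\otimes}\cent_T\fal(G)$, and Corollary \ref{cor:zbafinaht}(i) delivers that $\za(G)$ is hyper-Tauberian.

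The main obstacle is this last verification, and within it the genuinely nontrivial ingredient is the synthesis statement that closed subgroups are Ditkin sets for Fourier algebras of abelian groups; the identification of $X_D$ with a diagonal subgroup inside each block, and the bookkeeping that the off-diagonal blocks contribute nothing, are then routine. By contrast, the amenability and semisimplicity assertions are soft consequences of the finite direct-sum structure of $\cent_T\fal(G)$.
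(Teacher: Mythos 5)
Your proposal is correct and follows essentially the same route as the paper: pass to $\fA=\cent_T\fal(G)$ via Theorem \ref{theo:va}, identify each summand with the Fourier algebra of a compact abelian group, use the classical fact that (diagonal) subgroups are Ditkin --- indeed hyper-Ditkin, by \cite{rosenthal} --- sets there, combine the finitely many blocks as in Remark \ref{rem:wik}, and invoke Corollary \ref{cor:zbafinaht} with $B=G/T$. The only cosmetic difference is that you obtain amenability of $\fA$ directly from amenability of its abelian-group-algebra summands, whereas the paper extracts it from the hyper-Ditkin property of $X_D$ (approximability of the diagonal being equivalent to amenability); both are valid.
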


\begin{proof}
We consider the algebra $\cent_T\fal(G)$ and its spectrum $X$, whose form is desribed
in Theorem \ref{theo:va}.
Each $\fal(aT:R_a)\cong\fal(T/R_a)$ (which is the abelian group algebra
$\mathrm{L}^1(\what{T/R_a})$).  The diagonal $(T/R_a)_D$ in
$\fal(T/R_a)\hat{\otimes}\fal(T/R_a)$ is a subgroup and hence,
thanks to \cite{rosenthal}, hyper-Ditkin.  Thus Remark \ref{rem:wik}
shows us that $X_D\cong\bigcup_{aT\in G/T}(T/R_a)_D$ is hyper-Ditkin.
Letting $B=G/T$, we appeal to Corollary \ref{cor:zbafinaht}.
\end{proof}

It is surprisingly easy, from this point, to give a characterization of those
subsets $E$ of $\conj(G)$ which are approximable (as defined in the last section) for $\za(G)$.   
For abelian $G$ this was achieved
in \cite{liuvrw} and for amenable $G$ this was achieved in \cite{forrestkls}.  A coset
is any subset $K$ of $G$ which is closed under the ternary operation: $x,y,z\in K$ implies
$xy^{-1}z\in K$.  It is an exercise to see that this agrees with the ``standard" notion of coset
of some subgroup.  We let
$\Omega(G)$ denote the Boolean algebra generated by all cosets of $G$, and
$\Omega_c(G)$ denote those elements of $\Omega(G)$ which are closed.

\begin{proposition}\label{prop:virtabel1}
Let $G$ be compact, and $E$ be a closed subset of $\conj(G)$.  Then
$E$ is approximable for $\za(G)$ if and only if $\til{E}=\bigcup_{C\in E}C\in\Omega_c(G)$.
\end{proposition}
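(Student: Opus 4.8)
The plan is to deduce the statement from the known description, due to Forrest, Kaniuth, Lau and Spronk \cite{forrestkls} (and, for abelian $G$, to Liu, van Rooij and Wang \cite{liuvrw}), of which closed ideals of $\fal(G)$ admit bounded approximate identities, by transporting bounded approximate identities across the contractive projection $Z_G\colon\fal(G)\to\za(G)$. The point is that although $\za(G)$ is not itself a Fourier algebra, it sits inside $\fal(G)$ as the range of the expectation $Z_G$, and the conjugacy-saturated set $\til E=\bigcup_{C\in E}C$ is precisely the preimage of $E$ under $G\to\conj(G)$; hence a function in $\za(G)$ vanishes on $E$ exactly when, viewed on $G$, it vanishes on $\til E$, so that $\ideal_{\za(G)}(E)=\za(G)\cap\ideal_{\fal(G)}(\til E)$. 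Since $G$ is compact it is amenable, so by Ruan's theorem $\fal(G)$ is operator amenable and \cite{forrestkls} applies: a closed ideal of $\fal(G)$ has a bounded approximate identity if and only if its hull lies in $\Omega_c(G)$. (This input needs only amenability of $G$, so the structure theory of Section~\ref{ssec:virtabelian} is not actually required for this proposition.)

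For the ``if'' direction I would take $\til E\in\Omega_c(G)$ and let $(w_\alp)$ be a bounded approximate identity for $\ideal_{\fal(G)}(\til E)$ provided by \cite{forrestkls}, then set $f_\alp=Z_Gw_\alp\in\za(G)$. Each $f_\alp$ lies in $\ideal_{\za(G)}(E)$: since $\til E$ is conjugation invariant and $w_\alp|_{\til E}=0$, for $x\in\til E$ we get $f_\alp(x)=\int_G w_\alp(sxs^{-1})\,ds=0$, and $\norm{f_\alp}_\fal\le\norm{w_\alp}_\fal$ as $Z_G$ is contractive. For $u\in\ideal_{\za(G)}(E)\subseteq\ideal_{\fal(G)}(\til E)$ we have $w_\alp u\to u$ in $\fal(G)$; applying the continuous map $Z_G$ and the expectation property $Z_G(uw_\alp)=uZ_Gw_\alp$ yields $uf_\alp\to u$. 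Thus $(f_\alp)$ is a bounded approximate identity for $\ideal_{\za(G)}(E)$, i.e.\ $E$ is approximable.

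For the ``only if'' direction I would take a bounded approximate identity $(e_\alp)\subseteq\ideal_{\za(G)}(E)$ and form the closed ideal $J=\wbar{\ideal_{\za(G)}(E)\,\fal(G)}$ of $\fal(G)$. As each $e_\alp$ already lies in $J$ and acts as an approximate identity on the dense subspace $\ideal_{\za(G)}(E)\,\fal(G)$, a routine $\eps/3$ argument using $\sup_\alp\norm{e_\alp}_\fal<\infty$ shows $(e_\alp)$ is a bounded approximate identity for $J$. The crux is to identify the hull (zero set in $G$) of $J$ with $\til E$: the inclusion $\til E\subseteq\mathrm{hull}(J)$ is immediate since every element of $\ideal_{\za(G)}(E)$ vanishes on $\til E$, while for a class $C\notin E$ I would separate the compact set $C$ from the disjoint closed set $\til E$ using regularity of $\fal(G)$, picking $w\in\fal(G)$ with $w|_C=1$ and $w|_{\til E}=0$; then $Z_Gw\in\ideal_{\za(G)}(E)\subseteq J$ takes the value $1$ on $C$, so $\mathrm{hull}(J)=\til E$. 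Applying \cite{forrestkls} to $J$ then gives $\til E=\mathrm{hull}(J)\in\Omega_c(G)$.

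The hard part is exactly this hull computation: a bounded approximate identity for the central ideal $\ideal_{\za(G)}(E)$ does not obviously lift to one for $\ideal_{\fal(G)}(\til E)$, and it is the passage to the generated ideal $J\subseteq\ideal_{\fal(G)}(\til E)$ that rescues the argument --- but only after checking that $J$ still carries a bounded approximate identity and has hull exactly $\til E$. For the latter the averaging identity $Z_Gw(C)=\int_G w(sxs^{-1})\,ds$ together with the regularity and unitality of $\fal(G)$ (both available since $G$ is compact) are the essential ingredients. Everything else reduces to the expectation property of $Z_G$ and the cited characterization for $\fal(G)$.
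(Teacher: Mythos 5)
Your proof is correct, and your sufficiency direction (push a bounded approximate identity for $\ideal_\fal(\til E)$ through $Z_G$ using the expectation property) is exactly the paper's. The necessity direction, however, takes a genuinely different route. The paper argues directly: a bounded approximate identity $(u_\alp)$ for $\ideal_{\za}(E)$ is a bounded net in $\fal(G)\subseteq\mathrm{B}(G_d)$, and by regularity it converges pointwise, hence weak* in $\mathrm{B}(G_d)$, to the idempotent $1_{G\setminus\til E}$; Host's idempotent theorem then puts $\til E$ in $\Omega(G)$, and closedness of $\til E$ finishes. You instead form the closed ideal $J=\wbar{\ideal_{\za}(E)\,\fal(G)}$ of $\fal(G)$, verify (correctly --- unitality of $\fal(G)$ puts $\ideal_{\za}(E)$ inside $J$, and the $\eps/3$ argument transfers the approximate identity) that $J$ has a bounded approximate identity, compute $\mathrm{hull}(J)=\til E$ via regularity and the averaging $Z_Gw$, and then invoke the \emph{necessity} half of the Forrest--Kaniuth--Lau--Spronk characterization, where the paper only uses its sufficiency half. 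Both arguments are sound; the paper's is lighter, needing only Host's theorem applied to a weak* cluster point, while yours stays entirely inside $\fal(G)$ at the cost of the hull computation and of citing the harder direction of \cite{forrestkls} (whose own proof ultimately rests on the same idempotent-theorem idea). One small point worth making explicit in your write-up: the separation of the compact class $C$ from the disjoint closed set $\til E$ requires the standard strengthening of regularity (a function equal to $1$ on a compact set and vanishing on a disjoint closed set), which holds for $\fal(G)$ but is a bit more than point--closed-set separation.
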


\begin{proof}
If $(u_\alp)$ is a bounded approximate identity for $\ideal_{\za}(E)$.  Then
$(u_\alp)$ is a bounded net in $\fal(G)\subseteq\mathrm{B}(G_d)$, where the latter space is 
the Fourier-Stieltjes algebra of the discretized group $G_d$.  The embedding is an isometry thanks
to \cite{eymard}.  Since $\za(G)$ is regular, $(u_\alp)$ 
converges pointwise to the indicator function $1_{\conj(G)\setminus E}$ on $\conj(G)$, hence to
$1_{G\setminus\til{E}}$ in the weak* topology of $\mathrm{B}(G_d)$.  Hence by \cite{host},
$\til{E}\in\Omega(G)$.  Since $\til{E}$, being the pre-image of $E$ in $G$ under the conjugation
equivalence, is closed, we gain the desired conclusion.

If $\til{E}\in\Omega_c(G)$, then by \cite{forrestkls}, $\til{E}$ is approximable for $\fal(G)$.
If $(v_\alp)$ is a bounded approximate identity for $\ideal_\fal(\til{E})$, then
$(Z_Gv_\alp)$ is such for $\ideal_{\za}(E)$.
\end{proof}

We remark that the last proposition  reduces the 
general question of amenability of $\za(G)$ into a question
of whether $\conj(G)_D$ is in $\Omega(G\times G)$. 
(Indeed it follows from Lemma \ref{lem:tensprod} that $\za(G)\hat{\otimes}\za(G)\cong
\za(G\times G)$).  We do not know how to determine this for a general, even totally disconnected, group,
unless it is a product of finite groups; see Theorem \ref{theo:prodfinite}, for example.

Assesing which conjugacy-closed subsets are in $\Omega_c(G)$ for virtually abelian $G$
is surprisingly straightforward.

\begin{proposition}\label{prop:virtabel2}
If $G$ is compact and
virtually abelian, and $E\in \Omega_c(G)$, then $G^*E=\bigcup_{x\in E}C_x\in\Omega_c(G)$.
\end{proposition}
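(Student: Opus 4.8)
The plan is to exploit the finite-index normal abelian subgroup $T$ supplied by Theorem \ref{theo:va} to peel off the conjugation in two stages --- first by the finite quotient $G/T$, then by $T$ itself --- reducing everything to a saturation statement inside the compact abelian group $T$.

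First I would split the conjugation according to a transversal $g_1,\dots,g_n$ of $T$ in $G$. For $g=g_is$ with $s\in T$ one has $gEg^{-1}=g_i(sEs^{-1})g_i^{-1}$, so
\[
G^*E=\bigcup_{i=1}^n g_i\Bigl(\bigcup_{s\in T}sEs^{-1}\Bigr)g_i^{-1}=\bigcup_{i=1}^n g_i\,(T^*E)\,g_i^{-1}.
\]
Since each $x\mapsto g_ixg_i^{-1}$ is a topological automorphism of $G$, it carries closed cosets to closed cosets and hence preserves $\Omega_c(G)$; as $\Omega_c(G)$ is closed under finite unions, it suffices to prove $T^*E\in\Omega_c(G)$. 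Because $T$ is normal, conjugation by $T$ fixes each coset $aT$ setwise, so $T^*E=\bigsqcup_{aT\in G/T}T^*(E\cap aT)$, a finite disjoint union, and I would treat one coset at a time. A direct computation with the formula for $R_a$ in Theorem \ref{theo:va} gives $s(at)s^{-1}=at\,r$ with $r$ ranging over the closed subgroup $R_a$ as $s$ ranges over $T$; thus, transporting $aT$ to $T$ via $at\leftrightarrow t$ (a homeomorphism sending cosets of $G$ inside $aT$ to cosets of subgroups of $T$), the piece $T^*(E\cap aT)$ becomes the saturation $E_aR_a$ of a closed set $E_a\in\Omega_c(T)$ by the closed subgroup $R_a$.

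This reduces the whole proposition to the following Saturation Lemma: if $T$ is compact abelian, $H\leq T$ is a closed subgroup and $E\in\Omega_c(T)$, then $EH\in\Omega_c(T)$. Writing $\pi\colon T\to T/H$ for the quotient, one has $EH=\pi^{-1}(\pi(E))$. Preimages under $\pi$ send cosets to cosets, commute with the Boolean operations, and preserve closedness, so $\pi^{-1}$ maps $\Omega_c(T/H)$ into $\Omega_c(T)$; moreover $\pi(E)$ is automatically closed, being the continuous image of a compact set. Hence it is enough to show that $\pi(E)$ lies in the coset ring $\Omega(T/H)$, i.e.\ that the coset ring is stable under this quotient homomorphism.

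For the image step I would decompose $E$ into the atoms of the finite Boolean algebra generated by the closed cosets $C_1,\dots,C_k$ defining it; each atom has the shape $D\setminus\bigcup_j D_j$ with $D$ a closed coset (of a subgroup $K_D$) and the $D_j$ proper closed subcosets, and since $\pi$ respects unions it suffices to handle one atom. Here $\pi(D\setminus\bigcup_j D_j)=\pi(D)\setminus B$, where $B$ is the set of $s\in\pi(D)$ whose entire fibre $\pi^{-1}(s)\cap D$ is absorbed into $\bigcup_j D_j$; the fibres are cosets of $N=H\cap K_D$ and their traces on the $D_j$ are cosets of proper subgroups of $N$. The crux, and the main obstacle, is to show that this ``bad set'' $B$ is itself a coset-ring subset of $\pi(D)$. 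The point is that the fibres and $T/H$ may well be infinite (e.g.\ positive-dimensional, as already happens for $T=\Tee^2$), so one cannot argue by finiteness; instead I would invoke B.\,H.\ Neumann's lemma, to the effect that a coset of $N$ can be covered by finitely many cosets of subgroups of $N$ only when those of finite index already cover it. This lets me discard the infinite-index traces outright and reduce membership in $B$ to a finite alignment condition modulo a fixed finite-index subgroup of $N$, which visibly describes a Boolean combination of cosets. Assembling the atoms then gives $\pi(E)\in\Omega(T/H)$, and unwinding the two reductions yields $G^*E\in\Omega_c(G)$.
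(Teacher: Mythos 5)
Your two opening reductions are sound and essentially match the first half of the paper's argument: conjugation splits into a finite union over a transversal of $T$, and $T$-conjugation on the coset $aT$ is exactly multiplication by $R_a$, so everything comes down to showing that the saturation $E'R$ of a set $E'\in\Omega_c(T)$ by a closed subgroup $R\le T$ stays in $\Omega_c$. The gap is in your proof of that Saturation Lemma, and it is a genuine one: the argument is circular at the crux. After applying Neumann's lemma and passing to the finite-index subgroup $M=\bigcap_j N_j$ of $N=H\cap K_D$, the ``finite alignment condition'' you obtain describes the set $\tilde B=\{x\in D: xN\subseteq\bigcup_j D_jM\}$, which is indeed a Boolean combination of cosets --- but of cosets in $T$ (equivalently in $K_D$), not in $T/H$: the conditions indexed by the cosets of $M$ in $N$ all collapse under $\pi$ because $N\subseteq H$. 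The bad set $B$ you actually need is the image of $\tilde B$ under $\pi|_D$, whose fibres are the $N$-cosets; $\tilde B$ is $N$-saturated but not $H$-saturated, so to conclude $B\in\Omega(T/H)$ you must show that an $N$-saturated coset-ring subset of $K_D$ descends to a coset-ring subset of $K_D/N$ (equivalently, that $\tilde{B}H\in\Omega_c(T)$). Since $N$ is a closed, typically infinite, subgroup, this is precisely another instance of the image/saturation problem you set out to solve, so the induction does not close. (A smaller error: the traces of the $D_j$ on a fibre need not be cosets of \emph{proper} subgroups of $N$; for instance $L_j\cap H$ can equal $K_D\cap H$ even when $L_j\subsetneq K_D$.)

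The missing ingredient, and the one the paper uses, is the structure theorem for the coset ring (Gilbert, Schreiber, Forrest): every $E\in\Omega_c(G)$ has the form $\bigcup_k a_k\bigl(H_k\setminus\bigcup_j b_{kj}K_{kj}\bigr)$ with each $K_{kj}$ \emph{open} in the closed subgroup $H_k$. Since open subgroups of compact groups have finite index, each such piece is a finite union of closed cosets, so every element of $\Omega_c(G)$ is simply a finite union of closed cosets $aH$ with, after intersecting, $H\le T$. This collapses everything: your Saturation Lemma becomes the remark that $(aH)R=a(HR)$ is a closed coset, and in fact the paper skips the two-stage reduction entirely and writes the $G$-orbit of $aH$ directly as the finite union of closed cosets $\bigcup_{bT\in G/T}bab^{-1}\,bHb^{-1}R_{bab^{-1}}$, using the computation from Theorem \ref{theo:va}. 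I recommend you import that structure theorem rather than trying to prove stability of the coset ring under quotient images by hand.
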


\begin{proof}
A result of \cite{forrest} (after \cite{gilbert,schreiber}) shows that there are a finite
number of closed subgroups $H_1,\dots,H_n$, elements $a_1,\dots, a_n$ of $G$, and
for each $k$, open subgroups $K_{k1},\dots, K_{km_k}$ of $H_k$ and elements
$b_{k1},\dots,b_{km_k}$ of $H_k$ such that $E=\bigcup_{k=1}^na_k
\left(H_k\setminus\bigcup_{j=1}^{m_k}b_{kj}K_{kj}\right)$.  Since each $H_k$ is compact,
each $K_{kj}$ is of finite index.  We can hence rearrange this result to show that
$E$ is simply a union of finitely many closed cosets of subgroups of $G$.  Let such a coset
be given by $aH$ where $H$ is a closed subgroup.  By taking intersection, we may suppose
$H$ is a subgroup of an open normal abelian subgroup $T$, hence $aH\subset aT$.

However, the calculations from the proof of Theorem \ref{theo:va} show that the orbit of $aH$
under conjugation by $G$ is $\bigcup_{bT\in G/T}bab^{-1}bHb^{-1}R_{bab^{-1}}$, which
is clearly an element of $\Omega_c(G)$.
\end{proof}

\begin{remark}\label{rem:lasser}
Let $G$ and $T$ be as in Theorem \ref{theo:va}.
Using reasoning above, we see that $1_T\za(G)=\cent_{G/T}\fal(T)$, is amenable.
In particular for $G=\Tee\rtimes\{\id,\iota\}$, where $\iota(t)=t^{-1}$, we have
that $\cent_{\{\id,\iota\}}\fal(\Tee)\cong\cent_{\{\id,\hat{\iota}\}}\ell^1(\Zee)\cong\ell^1(\Zee/\{\id,\hat{\iota}\})$. 
Here, $\Zee/\{\id,\iota\}\cong\En_0$ is the polynomial hypergroup with multiplication
$\del_n\ast\del_m=\frac{1}{2}(\del_{|n-m|}+\del_{n+m})$.  This hypergroup algebra is
is also proved to be amenable in \cite{lasser0}.  

In fact, we may define a class of hypergroups by letting $F$ be any finite subgroup of 
$\mathrm{GL}_n(\Zee)$ and considering the orbit space $\Zee^n/F$.  We let
$\ell^1(\Zee^n/F)$ denote the closed subalgebra of $\ell^1(\Zee^n)$ generated by elements
\[
\del_{F(v)}=\frac{1}{|F|}\sum_{\alp\in F}\del_{\alp(v)},\;v\in\Zee^n.
\]
We have that $\ell^1(\Zee^n/F)\cong\cent_F\ell^1(\Zee^n)$ is amenable.
Indeed, if $G=\Tee^n\rtimes F$, where $F$ acts by dual action, then
$\cent_F\ell^1(\Zee^n)\cong\cent_{G/\Tee^n}(\Tee^n)$, and we appeal to
Theorem \ref{theo:vahta}.
\end{remark}

\begin{remark}\label{rem:zlone}  
Similarly as in Theorem \ref{theo:va}, by working on the dense subspace
of continuous functions, we may obtain a decomposition
\[
\cent\bl^1(G)\cong\cent_{G/T}\cent_T\bl^1(G)\cong\cent_{G/T}\left(
\bigoplus_{aT\in G/T}\del_a\ast\bl^1(T/R_{aT})\right).
\]
Hence if we could prove that $\cent_T\bl^1(G)$ is amenable, we will gain the amenability
of $\cent\bl^1(G)$ thanks to the main result of \cite{kepert}.
\end{remark}

\subsection{Hyper-Tauberian property and weak amenability}
We can give a complete characterizetion of both hyper-Tauberianness and weak amenability
for $\za(G)$.

\begin{theorem}\label{theo:zawa}
For any compact group $G$ the following are equivalent:

{\bf (i)} the connected component of the identity, $G_e$, is abelian;

{\bf (ii)} $\za(G)$ is hyper-Tauberian;

{\bf (iii)} all singleton sets of $\conj(G)$ are spectral for $\za(G)$;

{\bf (iv)} $\za(G)$ is weakly amenable; and

{\bf (v)} $\za(G)$ admits no bounded point derivations.
\end{theorem}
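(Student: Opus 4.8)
The plan is to establish the equivalences by a cycle, keeping the analytic cost concentrated in a single implication. Three of the arrows are essentially free. Hyper-Tauberianness implies weak amenability by \cite{samei}, giving \textbf{(ii)} $\Rightarrow$ \textbf{(iv)}; weak amenability precludes bounded point derivations, as recorded in Section \ref{sec:preliminaries}, giving \textbf{(iv)} $\Rightarrow$ \textbf{(v)}; and the contrapositive of Theorem \ref{theo:nacc} gives \textbf{(v)} $\Rightarrow$ \textbf{(i)}, since a non-abelian $G_e$ forces a non-zero point derivation on $\za(G)$. This yields \textbf{(ii)} $\Rightarrow$ \textbf{(iv)} $\Rightarrow$ \textbf{(v)} $\Rightarrow$ \textbf{(i)}, so it remains to prove \textbf{(i)} $\Rightarrow$ \textbf{(ii)} and to weave \textbf{(iii)} into the loop.

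To place \textbf{(iii)} I would prove \textbf{(iii)} $\Rightarrow$ \textbf{(v)} and \textbf{(i)} $\Rightarrow$ \textbf{(iii)}. For the first, recall that $\za(G)$ is unital, regular and Tauberian on $\conj(G)$, and that a bounded point derivation lives at a single class $C$, kills $\Cee 1$, and annihilates $\wbar{\ideal_{\za(G)}(\{C\})^2}$. Regularity gives $\ideal^0_{\za(G)}(\{C\})\subseteq\ideal_{\za(G)}(\{C\})^2$ (cut off by a function equal to $1$ on the support and vanishing near $C$), so $\wbar{\ideal_{\za(G)}(\{C\})^2}$ is a closed ideal with hull $\{C\}$ sandwiched between $\wbar{\ideal^0_{\za(G)}(\{C\})}$ and $\ideal_{\za(G)}(\{C\})$; if $\{C\}$ is spectral these coincide, the derivation vanishes on the maximal ideal, hence on all of $\za(G)=\Cee 1\oplus\ideal_{\za(G)}(\{C\})$, and must be zero. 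For \textbf{(i)} $\Rightarrow$ \textbf{(iii)} I would quote Proposition \ref{prop:finitesynthweaksynth}: singleton subsets of $\conj(G)$ are always weakly spectral, and when $G_e$ is abelian the weak-synthesis obstruction vanishes, so every singleton becomes spectral. With \textbf{(v)} $\Rightarrow$ \textbf{(i)} already in hand, \textbf{(iii)} then joins the other four.

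The real content is \textbf{(i)} $\Rightarrow$ \textbf{(ii)}. First I would reduce to compact Lie groups by writing $G=\varprojlim_N G/N$ over the closed normal subgroups $N$ with $G/N$ Lie. Since $(G/N)_e$ is a continuous homomorphic image of $G_e$ it is abelian, and an abelian connected compact Lie group is a torus of finite index in $G/N$; thus each $G/N$ is virtually abelian and Theorem \ref{theo:vahta} makes $\za(G/N)$ amenable, in particular hyper-Tauberian. By Proposition \ref{prop:quotient} each $\za(G/N)$ embeds isometrically as the closed subalgebra $\wbar{\spn}\{\chi_\pi:\pi\text{ trivial on }N\}$ of $\za(G)$, and as every $\chi_\pi$ factors through the Lie group $G/\ker\pi$ we get $\za(G)=\wbar{\bigcup_N\za(G/N)}$. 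Recasting \textbf{(ii)} through its definition and the identification $\za(G)\hat{\otimes}\za(G)\cong\za(G\times G)$ of Lemma \ref{lem:tensprod}, the goal becomes synthesis of the diagonal $X_D$ in $\za(G\times G)=\wbar{\bigcup_N\za(G/N\times G/N)}$, where at each finite level $X_D$ is even approximable, $\za(G/N)$ being amenable.

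The hard part will be this last lifting of diagonal synthesis from the dense union of quotient subalgebras to $\za(G\times G)$. The obvious device --- the norm-one projections onto the $\za(G/N)$ supplied by Proposition \ref{prop:quotient} --- does not respect the diagonal ideal, since averaging the two coordinates independently over $N$ destroys the conjugacy relation $x\sim_G y$; and since $G$ need not be virtually abelian, $\za(G)$ is typically non-amenable, so the bounded approximate identities that synthesize $X_D$ at each finite level must blow up in norm and only the unquantified synthesis can survive. I would therefore route the argument through the locality of spectral synthesis in regular Tauberian algebras: it suffices to synthesize $X_D$ locally at each diagonal class $(C,C)$, where the relevant local structure of $\za(G\times G)$ is governed by a Lie quotient $G/N\times G/N$ on which Theorem \ref{theo:vahta} already supplies synthesis of the diagonal. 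Showing that these local syntheses genuinely assemble into synthesis of the whole diagonal, with no uniform bound available, is the crux of the proof.
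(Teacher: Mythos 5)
Your treatment of the easy arrows is fine: (ii)$\Rightarrow$(iv) from \cite{samei}, (iv)$\Rightarrow$(v), (v)$\Rightarrow$(i) as the contrapositive of Theorem \ref{theo:nacc}, and your (iii)$\Rightarrow$(v) is essentially the Singer--Wermer observation the paper uses. But there are two genuine problems. First, your proposed (i)$\Rightarrow$(iii) misreads Proposition \ref{prop:finitesynthweaksynth}: that proposition is stated for \emph{non-abelian} compact connected Lie groups and only gives \emph{weak} spectrality with characteristic $\xi_{\za}(\{C\})\leq 1+\dim C/2$; nothing in it makes ``the weak-synthesis obstruction vanish'' when $G_e$ is abelian, and indeed that proposition depends on the present theorem rather than the other way around. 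The paper gets (iii) from (ii) directly, via \cite[Theo.\ 5]{samei} (a hyper-Tauberian algebra has every singleton spectral); you should route (iii) that way, which costs nothing once (i)$\Rightarrow$(ii) is in hand.

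Second, and more seriously, the one implication carrying all the content, (i)$\Rightarrow$(ii) for non-Lie $G$, is not actually proved. You correctly reduce to the net of Lie quotients $G/N$, each virtually abelian with $\za(G/N)\cong P_N\za(G)$ hyper-Tauberian and $\bigcup_N P_N\za(G)$ dense in $\za(G)$, but you then acknowledge that assembling the diagonal synthesis from the dense union is ``the crux'' and leave it as a proposal about ``locality of spectral synthesis'' without an argument. The paper's resolution avoids any local analysis of $X_D$: it uses the permanence properties of hyper-Tauberian algebras from \cite{samei}, namely that an $\ell^1$-direct sum of hyper-Tauberian algebras is hyper-Tauberian (\cite[Cor.\ 13]{samei}) and that the image of a hyper-Tauberian algebra under a continuous homomorphism with dense range is hyper-Tauberian (\cite[Theo.\ 12]{samei}). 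One forms the completion $\za_\fN(G)$ of $\bigcup_{N}P_N\za(G)$ in the infimum norm $\norm{u}_\fN=\inf\{\sum_N\norm{u_N}_\fal: u=\sum_N u_N\}$ (an algebra norm because $P_N\za(G)\,P_{N'}\za(G)\subseteq P_{N'}\za(G)$ for $N\supseteq N'$), deduces it is hyper-Tauberian from the $\ell^1$-sum, and then applies \cite[Theo.\ 12]{samei} to the continuous dense-range inclusion $\za_\fN(G)\hookrightarrow\za(G)$. Your concern that the projections $P_N$ do not respect the diagonal ideal, and that no uniform bounds survive, is precisely why one should not attempt to synthesize $X_D$ directly; the abstract permanence theorems are the missing ingredient.
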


\begin{proof}
That (ii) implies (iii) and (iv) are both from \cite[Theo.\ 5]{samei}.  
A well-known observation from \cite{singerw} is that a commutative Banach algebra admits 
a bounded point derivation at a multiplicative functional $\chi$ if and only if $(\ker\chi)^2$
is dense in $\ker\chi$.  Hence (iii) implies (v).  That (iv) implies (v) follows from a well-known 
fact mentioned in Section \ref{sec:preliminaries}.
Theorem \ref{theo:nacc} provides that (v) implies (i).

Hence it remains to see that (i) implies (ii).  In the case that $G$ is virtually abelian, this is from
Theorem \ref{theo:vahta}.

Now let us consider the general case where $G_e$ is abelian.  We follow the proof
of \cite[Theo.\ 3.3]{forrestr}.  We let $\fN$ be a net of closed normal subgroups, ordered by reverse 
inclusion, such that for any neighbourhood $U$ of $e$, we eventually have $N\subset U$ for 
some $N$ in $\fN$, and for which $G/N$ is Lie for each $N$ in $\fN$. For example, we may let
$N=N_F=\bigcap_{\pi\in F}\ker\pi$ for the increasing net of all finite subsets of $\what{G}$.
Then each $G/N$ is virtually abelian, which follows from \cite[(7.12)]{hewittrI}, for example. 
Hence, by  Proposition \ref{prop:quotient}, $\za(G/N)\cong P_N(\za(G))$, and hence is 
hyper-Tauberian.  Also if $N\supset N'$ then $P_N(\za(G))\subset P_{N'}(\za(G))$.
It then is easy to check that for $u$ in $\za(G)$, the difference $u-P_Nu$ tends to $0$ as $N$ tends to $e$.
Hence $\bigcup_{N\in\fN}P_N\za(G)$ is dense in $\za(G)$.  We appeal to \cite[Cor.\ 13]{samei} to
see that $\ell^1\text{-}\bigoplus_{N\in\fN}P_N\za(G)$ is hyper-Tauberian, and hence
the completion $\za_\fN(G)$ of $\bigcup_{N\in\fN}P_N\za(G)$, with respect to the norm
\[
\norm{u}_\fN=\inf\left\{\sum_{N\in\fN}\norm{u_N}_\fal:u=\sum_{N\in\fN}u_N,u_N\in P_N\za(G)\right\}
\geq\norm{u}_\fal
\]
is hyper-Tauberian, thanks to \cite[Theo.\ 12]{samei}.  Notice for $N\supseteq N'$ that
$P_N\za(G) P_{N'}\za(G)\subseteq P_{N'}\za(G)$, so $\norm{\cdot}_\fN$ is indeed an algebra norm.
But the continuous inclusion with dense range, $\za_\fN(G)\hookrightarrow\za(G)$, shows again
by  \cite[Theo.\ 12]{samei} that the latter algebra is hyper-Tauberian.
\end{proof}

\begin{remark}
{\bf (i)} We note that using the density of $\bigcup_{N\in\fN}P_N\za(G)$ in $\za(G)$, above,
we may have more easily shown that (i) implies (iv) directly.  We found it more satisfying
to obtain the stronger hyper-Tauberian property.  We note that \cite[Rem.\ 24(ii)]{samei} shows that
hyper-Tauberianness is stronger than weak amenability.

{\bf (ii)} A technique employed in the proof of \cite[Lem.\ 3.6]{forrestss1} and \cite[Theo.\ 2.4]{azimifardss}
can be used to allow us to bypass the algebra $\za_\fN(G)$, employed above.  Our present technique
allows us to avoid introducing local maps, which, admittedly, are used in the original definition
of hyper-Tauberianess in \cite{samei}. 
\end{remark}

The following partially recovers a non-spectral result of \cite{meaney}, but uses different methods.
We defined weak spectrality at the end of the last section.

\begin{proposition}\label{prop:finitesynthweaksynth}
Let $G$ be a non-abelian compact connected Lie group.  

{\bf (i)} There exists a $C$ in $\conj(G)$
for which $C$ is not spectral for $\fal(G)$.  

{\bf (ii)} Any finite $F\subseteq\conj(G)$ is weakly spectral for $\za(G)$ with
$\xi_{\za}(F)\leq |F|+\sum_{C\in F}\dim C/2$ where $\dim C$ is the dimension of the manifold $C$.
\end{proposition}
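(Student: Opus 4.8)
The unifying tool is the transfer principle of Proposition \ref{prop:weaksyn}. Since $G$ is compact, $\mathrm{Inn}(G)\cong G/\cent(G)$ is a compact group acting isometrically and continuously on $\fA=\fal(G)$ (whose spectrum is $G$), with $\cent_{\mathrm{Inn}(G)}\fal(G)=\za(G)$ and orbit space $\conj(G)$. Each conjugacy class $C$ is a single $\mathrm{Inn}(G)$-orbit, hence an invariant subset $C=\mathrm{Inn}(G)^*C$ of $G$, so Proposition \ref{prop:weaksyn} yields $\xi_{\za}(\{C\})\leq\xi_\fal(C)$ whenever $C$ is weakly spectral for $\fal(G)$. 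Both parts are obtained by feeding geometric information about $C$, as a submanifold of $G$, through this inequality.

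For (i), I would start from Theorem \ref{theo:nacc}: as $G=G_e$ is non-abelian compact connected Lie, $\za(G)$ carries a non-zero bounded point derivation at some class $C$ (the class of a regular, transcendental-eigenvalue element of a subgroup $\spu(2)$ or $\spo(3)$). Writing $\chi_C$ for the corresponding character, so $\ker\chi_C=\ideal_{\za}(\{C\})$, the Singer--Wermer criterion \cite{singerw} shows that the existence of this derivation forces $(\ker\chi_C)^2$ to be non-dense in $\ker\chi_C$. Regularity of $\za(G)$ gives $\ideal^0_{\za}(\{C\})\subseteq(\ker\chi_C)^2$ (choose a $\phi\in\za(G)$ equal to $1$ on $\supp u$ and vanishing near $C$, so $u=\phi u$ with $\phi,u\in\ker\chi_C$), whence $\wbar{\ideal^0_{\za}(\{C\})}\subsetneq\ideal_{\za}(\{C\})$; that is, $\{C\}$ is not spectral for $\za(G)$. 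Were $C$ spectral for $\fal(G)$, i.e.\ $\xi_\fal(C)=1$, Proposition \ref{prop:weaksyn} would give $\xi_{\za}(\{C\})\leq 1$, making $\{C\}$ spectral, a contradiction. Hence $C$ is not spectral for $\fal(G)$.

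For (ii) I would proceed in three steps. First, the key estimate: each conjugacy class $C$, a compact $C^\infty$ submanifold of $G$ of dimension $\dim C$, is weakly spectral for $\fal(G)$ with $\xi_\fal(C)\leq 1+\lfloor\dim C/2\rfloor$. Since $\ideal^0_\fal(C)^\perp=\{T\in\vn(G):\supp T\subseteq C\}$, the bipolar theorem identifies $\wbar{\ideal^0_\fal(C)}$ with the elements of $\fal(G)$ killed by every such $T$, so it suffices to prove that each $T$ supported on $C$ annihilates $u^{m+1}$ for all $u\in\ideal_\fal(C)$, where $m=\lfloor\dim C/2\rfloor$. After localising by a partition of unity (weak synthesis being local, cf.\ \cite{warner}) and passing to normal coordinates along $C$, this reduces to the bound that a bounded functional on $\fal$ supported on $C$ has transverse order at most $\lfloor\dim C/2\rfloor$ --- a density on a $d$-dimensional manifold can carry only $d/2$ transverse derivatives and still lie in $\vn(G)$ --- whence it annihilates the $(m+1)$-fold products, which vanish to transverse order $m+1$ along $C$. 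Second, Proposition \ref{prop:weaksyn} transfers this: $\xi_{\za}(\{C\})\leq\xi_\fal(C)\leq 1+\lfloor\dim C/2\rfloor$. Third, for $F=\{C_1,\dots,C_k\}$ I would use additivity of the weak-synthesis characteristic: if $u\in\ideal_{\za}(F)=\bigcap_i\ideal_{\za}(\{C_i\})$ then $u^{n_i}\in\wbar{\ideal^0_{\za}(\{C_i\})}$ with $n_i=1+\lfloor\dim C_i/2\rfloor$, and approximating each $u^{n_i}$ from $\ideal^0_{\za}(\{C_i\})$ and multiplying shows $u^{\sum_i n_i}\in\wbar{\ideal^0_{\za}(F)}$, since a product of functions whose supports avoid the respective $C_i$ has support avoiding $F$. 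This gives $\xi_{\za}(F)\leq\sum_i n_i=|F|+\sum_i\lfloor\dim C_i/2\rfloor\leq|F|+\sum_{C\in F}\dim C/2$.

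The main obstacle is the transverse-order estimate in the first step of (ii): quantifying, via Plancherel/trace-norm (curvature) considerations on the homogeneous manifold $C$, exactly how many transverse derivatives a density on $C$ may carry while still defining an element of $\vn(G)$. This is the source of the factor $\dim C/2$ and the point at which our approach diverges from the explicit computations of \cite{meaney} and \cite{ricci}; steps two and three are formal consequences of the transfer principle and the union argument, while (i) uses only the transfer inequality together with Theorem \ref{theo:nacc}.
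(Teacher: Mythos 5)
Your overall architecture coincides with the paper's: both parts are run through the transfer inequality $\xi_{\za}(E)\leq\xi_\fal(E)$ of Proposition \ref{prop:weaksyn} applied to the conjugation action, part (i) rests on the non-spectrality of some singleton of $\conj(G)$ for $\za(G)$ (which the paper extracts from Theorem \ref{theo:zawa}(iii), and which you re-derive directly from Theorem \ref{theo:nacc} via Singer--Wermer and the localization $\ideal^0_{\za}(\{C\})\subseteq(\ker\chi_C)^2$ --- a correct unpacking of the same chain of implications), and part (ii) combines a single-class estimate with subadditivity of the characteristic over finite unions. Your union step, performed inside $\za(G)$ after transferring each singleton, is a sound variant of the paper's, which instead applies Warner's union result in $\fal(G)$ to the invariant set $\bigcup_{C\in F}C$ and transfers once; the two orderings give the same bound.

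The one genuine gap is the key estimate $\xi_\fal(C)\leq 1+\dim C/2$ for a single conjugacy class. You correctly identify this as the crux and the source of the factor $\dim C/2$, but your ``transverse-order'' argument --- that a functional in $\vn(G)$ supported on a $d$-dimensional submanifold can carry at most $d/2$ transverse derivatives and hence annihilates $u^{m+1}$ for $u\in\ideal_\fal(C)$, $m=\lfloor d/2\rfloor$ --- is a heuristic, not a proof. Making it rigorous requires controlling the growth of $\norm{u^n}_\fal$ (equivalently, Plancherel-type estimates in normal coordinates along the homogeneous manifold $C$) and is precisely the content of \cite[Cor.\ 4.9]{ludwigt}, which the paper simply cites. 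If you replace your sketch by that citation, your argument closes and is essentially the published proof; as written, the proposal leaves unproved the only analytically substantial step.
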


\begin{proof}
The failure of spectrality for some $\{C\}$ follows from (iii), above, and Proposition \ref{prop:weaksyn}.
On the other hand, \cite[Cor.\ 4.9]{ludwigt} shows that a single conjugacy class
$C$ is always weakly spectral for $\fal(G)$
with $\xi_\fal(C) \leq1+\dim C/2$.  Hence, a result in \cite{warner} shows that
\[
\xi_\fal\left(\bigcup_{C\in F}C\right)\leq\sum_{C\in F}\xi_\fal(C)=|F|+\sum_{C\in F}\dim C/2.
\]
Again we appeal to Proposition \ref{prop:weaksyn}.
\end{proof}

\section{Finite groups and their direct products}\label{sec:amenconst} 

We recall from \cite{johnson1}, that amenability of a Banach algebra $\fA$
is equivalent to having a {\it bounded approximate diagonal} (b.a.d.):  a bounded net
$(d_\iota)\subset\fA\hat{\otimes}\fA$ which for each $a$ in $\fA$ satisfies
\[
(a\otimes 1)d_\iota-d_\iota(1\otimes a)\overset{\iota}{\longrightarrow}0\text{ and }
m(d_\iota)a\overset{\iota}{\longrightarrow}a
\]
where $m:\fA\hat{\otimes}\fA\to\fA$ is the multiplication map. 
We let the {\it amenability constant} be given by
\[
\amen(\fA)=\inf\{M>0:\text{there is a b.a.d.\ }(d_\iota)\text{ for }\fA\text{ with }
\norm{d_\iota}_{\fA\hat{\otimes}\fA}\leq M\}
\]
where we adopt the convention that $\inf\varnothing=\infty$.
It will be useful for us to understand the tensor product $\za(G)\hat{\otimes}\za(G')$,
where $G'$ is another compact group.

\begin{lemma}\label{lem:tensprod}
We have an isometric isomorphism 
\[
\za(G)\hat{\otimes}\za(G')\cong\za(G\times G').
\]
\end{lemma}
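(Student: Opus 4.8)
The plan is to identify both sides concretely using the character description of the central Fourier algebra from \eqref{eq:zanorm}, and then observe that the claimed isomorphism is essentially the statement that irreducible representations of a product are exactly the (external) tensor products of irreducibles of the factors. The key structural input is that $\za(G)\cong\ell^1(\what{G},d^2)$ as a weighted $\ell^1$-space, so that the projective tensor product of two such spaces should again be a weighted $\ell^1$-space on the product index set.

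First I would recall the fundamental fact about representations of direct products: every $\rho$ in $\what{G\times G'}$ is of the form $\pi\otimes\pi'$ for a unique pair $(\pi,\pi')\in\what{G}\times\what{G'}$, with $d_\rho=d_\pi d_{\pi'}$ and $\chi_\rho(x,x')=\chi_\pi(x)\chi_{\pi'}(x')$. This gives a bijection $\what{G}\times\what{G'}\to\what{G\times G'}$, and correspondingly a bijection of the characters $\chi_\pi\otimes\chi_{\pi'}\mapsto\chi_{\pi\otimes\pi'}$, where I use $\chi_\pi\otimes\chi_{\pi'}$ to denote the elementary tensor sitting inside $\za(G)\hat{\otimes}\za(G')$. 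Define a map $\Phi$ on elementary tensors by $\chi_\pi\otimes\chi_{\pi'}\mapsto\chi_{\pi\otimes\pi'}$ and extend linearly; the goal is to show $\Phi$ extends to an isometric isomorphism onto $\za(G\times G')$.

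Second, I would compute norms. On the algebraic tensor product, a general element is a finite sum $\sum_{\pi,\pi'}\alp_{\pi,\pi'}\chi_\pi\otimes\chi_{\pi'}$. By the universal property of the projective norm, $\norm{\cdot}_{\hat{\otimes}}$ is the largest cross-norm, and since $\za(G)$ and $\za(G')$ are weighted $\ell^1$-spaces with weights $d_\pi$ and $d_{\pi'}$, their projective tensor product is isometrically the weighted $\ell^1$-space on $\what{G}\times\what{G'}$ with weight $d_\pi d_{\pi'}$; this is the standard identification $\ell^1(I)\hat{\otimes}\ell^1(J)\cong\ell^1(I\times J)$ carried through the weights. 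Concretely, $\norm{\sum\alp_{\pi,\pi'}\chi_\pi\otimes\chi_{\pi'}}_{\hat{\otimes}}=\sum_{\pi,\pi'}d_\pi d_{\pi'}|\alp_{\pi,\pi'}|$. On the other side, applying \eqref{eq:zanorm} to $G\times G'$ and using $d_{\pi\otimes\pi'}=d_\pi d_{\pi'}$ gives $\norm{\Phi(\cdot)}_\fal=\sum_{\pi,\pi'}d_\pi d_{\pi'}|\alp_{\pi,\pi'}|$, which matches exactly. Hence $\Phi$ is isometric on a dense subspace and extends to an isometry with dense range, therefore onto; that $\Phi$ is multiplicative follows since it respects the identification $\chi_\pi\chi_\rho$ with the decomposition of tensor products of representations, consistently on both factors.

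The main obstacle I anticipate is justifying the projective-norm computation rigorously rather than invoking it as folklore: one must verify that the natural map from $\ell^1(\what G,d^2)\hat{\otimes}\ell^1(\what{G'},d'^2)$ to the weighted $\ell^1$-space on the product is genuinely isometric, i.e.\ that the projective cross-norm collapses to the $\ell^1$-sum of weights with no loss. The clean way is to exhibit matching dual pairings: since $\za(G)^*\cong\ell^\infty(\what G)$ and $\za(G\times G')^*\cong\ell^\infty(\what{G}\times\what{G'})=\ell^\infty(\what G)\bar{\otimes}\ell^\infty(\what{G'})$, the predual identification follows by duality, because the projective tensor product is the predual of the normal spatial tensor product of the two commutative von Neumann algebras. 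I would present the argument via this duality, as it sidesteps estimating the projective norm directly and makes the isometry transparent from the $\ell^\infty$-direct-sum descriptions of the preduals established in Section~\ref{sec:preliminaries}.
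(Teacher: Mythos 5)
Your proof is correct and is essentially the paper's second proof of this lemma: identify $\za(G)\cong\ell^1(\what{G},d^2)$, use that $\what{G\times G'}\cong\what{G}\times\what{G'}$ via Kronecker products with $d_{\pi\otimes\pi'}=d_\pi d_{\pi'}$, and invoke the standard isometric identification of projective tensor products of (weighted) $\ell^1$-spaces. (The paper also records an alternative argument via the Effros--Ruan theorem $\fal(G)\hat{\otimes}^{op}\fal(G')\cong\fal(G\times G')$ and the complete quotient map $Z_G$, but your route matches its second proof.)
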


\begin{proof}
Let us give two proofs.  

For the first, we recall the theorem of \cite{effrosr} that
\begin{equation}\label{eq:effrosr}
\fal(G)\hat{\otimes}^{op}\fal(G')\cong\fal(G\times G')
\end{equation}
where $\hat{\otimes}^{op}$ denotes the operator projective tensor product.  The map 
$Z_G:\fal(G)\to\za(G)$ is easily verified to be a complete quotient map, so we have a 
completley isometric inclusion
\[
\za(G)\hat{\otimes}^{op}\za(G')=Z_G\otimes Z_{G'}(\fal(G)\hat{\otimes}^{op}\fal(G'))
\subset\fal(G)\hat{\otimes}^{op}\fal(G').
\]
But  in the identification (\ref{eq:effrosr}), we have that $Z_G\otimes Z_{G'}\cong Z_{G\times G'}$,
so $Z_G\otimes Z_{G'}(\fal(G)\hat{\otimes}^{op}\fal(G'))\cong\za(G\times G')$.  Since
$\za(G)^*\cong\cent\vn(G')$ is a commutative von Neuman algebra, we obtain an isometric identification
\linebreak $\za(G)\hat{\otimes}^{op}\za(G')\cong\za(G)\hat{\otimes}\za(G')$.

For the second proof, we use the fact that $\za(G)\cong\ell^1(\what{G},d^2)$ as noted in Section
\ref{sec:preliminaries}.  Using that $\what{G\times G'}\cong\what{G}\times\what{G'}$ (irreducible
representations of products are exactly the Kroenecker products of irreducible representations)
we see that $\za(G\times G')\cong\ell^1(\what{G}\times\what{G'},d^2\times d^2)$, where
$d^2\times d^2(\pi,\pi')=d_\pi^2 d_{\pi'}^2$.  Hence the usual tensor product formula
shows that
\[
\za(G)\hat{\otimes}\za(G')\cong\ell^1(\what{G},d^2)\hat{\otimes}\ell^1(\what{G'},d^2)\cong
\ell^1(\what{G}\times\what{G'},d^2\times d^2)\cong\za(G\times G')
\]
with isometric identifications.
\end{proof}

The following computation on a finite group mirrors \cite[Theo.\ 1.8]{azimifardss}, where
for a finite group $G$ it is shown that
\[
\amen(\cent\bl^1(G))=\frac{1}{|G|^2}\sum_{C,C'\in\conj(G)}
|C||C'|\left|\sum_{\pi\in\what{G}}d_\pi^2\chi_\pi(C)\wbar{\chi_\pi(C')}\right|.
\]

\begin{proposition}\label{prop:amenconst}
Let $G$ be a finite group.  Then
\[
\amen(\za(G))=\frac{1}{|G|^2}\sum_{\pi,\pi'\in\what{G}\times\what{G}}d_\pi d_{\pi'}
\left|\sum_{C\in\conj(G)}|C|^2\chi_\pi(C)\wbar{\chi_{\pi'}(C)}\right|.
\]
In particular we see that $1\leq \amen(\za(G))$, with the bound achieved exactly
when $G$ is abelian.
\end{proposition}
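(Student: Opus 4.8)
The plan is to exploit that, for finite $G$, the algebra $\za(G)$ is finite-dimensional, commutative and semisimple with Gelfand spectrum $\conj(G)$ (as recalled in Section~\ref{sec:preliminaries}), and amenable by Theorem~\ref{theo:vahta}. For such an algebra the amenability constant is attained by a genuine diagonal. The minimal idempotents are precisely the indicator functions $1_C$ of the conjugacy classes $C\in\conj(G)$ (these lie in $\za(G)$ as central class functions). Writing $a\cdot 1_C=\hat a(C)1_C$ for the value of $a\in\za(G)$ at the point $C$ of the spectrum, any $d=\sum_{C,C'}c_{C,C'}1_C\otimes 1_{C'}$ with $(a\otimes1)d=d(1\otimes a)$ for all $a$ must have $c_{C,C'}=0$ for $C\neq C'$, by regularity of $\za(G)$, while $m(d)=\sum_C c_{C,C}1_C=1$ forces $c_{C,C}=1$. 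Hence $d=\sum_{C\in\conj(G)}1_C\otimes 1_C$ is the unique normalised diagonal; the constant net $(d)$ is a bounded approximate diagonal, and a compactness argument in the finite-dimensional space $\za(G)\hat{\otimes}\za(G)$ shows every bounded approximate diagonal clusters at $d$, whence $\amen(\za(G))=\norm{d}_{\za(G)\hat{\otimes}\za(G)}$.

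Next I would compute $\norm d$ explicitly. The second orthogonality relation for characters gives $1_C=\tfrac{|C|}{|G|}\sum_{\pi\in\what{G}}\wbar{\chi_\pi(C)}\,\chi_\pi$. Substituting this into $d$ and invoking Lemma~\ref{lem:tensprod} to identify $\za(G)\hat{\otimes}\za(G)\cong\za(G\times G)\cong\ell^1(\what{G}\times\what{G},d^2\times d^2)$, in which $\chi_\pi\otimes\chi_{\pi'}$ is the character $\chi_{\pi\times\pi'}$ of the irreducible $\pi\times\pi'$ of $G\times G$ with $d_{\pi\times\pi'}=d_\pi d_{\pi'}$, one reads off
\[
d=\frac{1}{|G|^2}\sum_{\pi,\pi'\in\what{G}}\Bigl(\sum_{C\in\conj(G)}|C|^2\,\wbar{\chi_\pi(C)}\,\wbar{\chi_{\pi'}(C)}\Bigr)\chi_\pi\otimes\chi_{\pi'}.
\]
The $\ell^1$-norm is then $\frac{1}{|G|^2}\sum_{\pi,\pi'}d_\pi d_{\pi'}\bigl|\sum_C|C|^2\wbar{\chi_\pi(C)}\wbar{\chi_{\pi'}(C)}\bigr|$; since $|\bar w|=|w|$ and the substitution $\pi'\mapsto\bar{\pi'}$ is a bijection of $\what{G}$ with $d_{\bar{\pi'}}=d_{\pi'}$ and $\chi_{\bar{\pi'}}=\wbar{\chi_{\pi'}}$, this equals the stated expression.

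For the lower bound, observe that under $\za(G)\hat{\otimes}\za(G)\cong\za(G\times G)$ the element $1_C\otimes 1_{C'}$ becomes the indicator of the conjugacy class $C\times C'$ of $G\times G$, so $d$ becomes $1_\Delta$ with $\Delta=\{(x,y)\in G\times G:x\sim_G y\}$. As the Fourier algebra norm dominates the uniform norm and $1_\Delta(e,e)=1$, we get $\amen(\za(G))=\norm{1_\Delta}_{\fal(G\times G)}\geq\norm{1_\Delta}_\infty=1$. When $G$ is abelian all $d_\pi=1$, all classes are singletons, and the first orthogonality relation $\sum_x\chi_\pi(x)\wbar{\chi_{\pi'}(x)}=|G|\delta_{\pi,\pi'}$ collapses the formula to $\frac{1}{|G|^2}\,|G|\,|\what{G}|=1$.

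The remaining, and most delicate, point is that equality forces $G$ abelian. Here $\amen(\za(G))=1$ means $\norm{1_\Delta}_{\fal(G\times G)}=1$; the contractive inclusion $\fal\hookrightarrow\mathrm{B}$ then gives $\norm{1_\Delta}_{\mathrm{B}(G\times G)}\leq 1$, while $\norm{1_\Delta}_{\mathrm{B}(G\times G)}\geq\norm{1_\Delta}_\infty=1$, so $1_\Delta$ is a norm-one idempotent in $\mathrm{B}(G\times G)$. By the classical description of norm-one idempotents as indicators of cosets of open subgroups, $\Delta$ is such a coset; since $(e,e)\in\Delta$, it is an open subgroup of $G\times G$. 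Finally, if $\Delta$ is a subgroup then for any $a,g$ the pairs $(a,gag^{-1})$ and $(a^{-1},a^{-1})$ lie in $\Delta$, hence so does their product $(e,gag^{-1}a^{-1})$; as only $e$ is conjugate to $e$, this gives $gag^{-1}a^{-1}=e$, i.e.\ $G$ is abelian. I expect the main obstacle to be justifying this converse cleanly: either pinning down the norm-one idempotent theorem in the exact form needed on $G\times G$, or, for a self-contained route, showing directly from the formula that the Hermitian array $z_{\pi,\pi'}=\sum_C|C|^2\chi_\pi(C)\wbar{\chi_{\pi'}(C)}$ (which satisfies $\frac{1}{|G|^2}\sum_{\pi,\pi'}d_\pi d_{\pi'}z_{\pi,\pi'}=1$ via $\sum_\pi d_\pi\chi_\pi=\chi_{\mathrm{reg}}$) must have a strictly negative entry whenever $G$ is non-abelian.
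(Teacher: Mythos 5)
Your derivation of the displayed formula is essentially the paper's: both arguments identify the unique diagonal of the finite-dimensional commutative semisimple algebra $\za(G)$ as $\sum_{C\in\conj(G)}1_C\otimes 1_C=1_{\conj(G)_D}$, expand each $1_C$ by character orthogonality, and read off the norm in $\za(G\times G)\cong\za(G)\hat{\otimes}\za(G)$ via (\ref{eq:zanorm}). (The paper cites \cite{ghandeharihs} for uniqueness of the diagonal rather than re-deriving it from the minimal idempotents, but your verification is fine.) Your lower bound $\amen(\za(G))\geq 1$ via $\norm{\cdot}_\fal\geq\norm{\cdot}_\infty$ is also correct, though different from the paper's.

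The genuine issue is the equality case, which you yourself flag as the delicate point and do not close. Your proposed route --- $\norm{1_\Delta}_{\mathrm{B}(G\times G)}=1$ forces $\Delta$ to be an open coset, hence a subgroup containing $(e,e)$, hence $G$ abelian --- is sound in outline, but it rests on the norm-one idempotent theorem for Fourier--Stieltjes algebras of (non-abelian) groups, which is a nontrivial external result that you would need to cite precisely; as written this is a gap. The paper avoids this entirely with a two-line computation from the formula itself: restricting the outer sum to $\pi=\pi'$ (all summands are nonnegative) gives $\amen(\za(G))\geq\frac{1}{|G|^2}\sum_{\pi}d_\pi^2\sum_{C}|C|^2|\chi_\pi(C)|^2$, and since $|C|^2\geq|C|$ this is at least $\frac{1}{|G|}\sum_\pi d_\pi^2\langle\chi_\pi|\chi_\pi\rangle=1$; if $G$ is non-abelian some class has $|C|^2>|C|$ (and $|\chi_\pi(C)|=1$ for the trivial $\pi$), so the inequality is strict. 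This is exactly the ``self-contained route'' you speculate about at the end, except that positivity enters through the weights $|C|^2$ versus $|C|$ rather than through a negative entry of your Hermitian array; you should replace the idempotent-theorem appeal with this argument.
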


\begin{proof}
Any bounded approximate diagonal admits a cluster point, which is a diagonal; i.e.\ $d$ in
$\za(G)\hat{\otimes}\za(G)$ for which $m(d)=1$ and $(u\otimes 1)d=d(1\otimes u)$.
It was observed in \cite{ghandeharihs} that for a finite dimensional amenable commutative algebra
that the diagonal is unique.  In fact Lemma \ref{lem:tensprod} provides that
this diagonal must be the indicator function of the diagonal
of the spectrum of $\za(G\times G)$, $1_{\conj(G)_D}=\sum_{C\in\conj(G)}1_{C\times C}$.
The Schur orthogonality relations provide Fourier series
\begin{align*}
1_{C\times C}
&=\sum_{\pi,\pi'\in\what{G}\times\what{G}}\langle 1_{C\times C}|\chi_\pi\otimes\chi_{\pi'}\rangle
\chi_\pi\otimes\chi_{\pi'} \\
&=\sum_{\pi,\pi'\in\what{G}\times\what{G}}\left(\frac{1}{|G|^2}
\sum_{x,y\in G\times G}1_{C\times C}(x,y)\wbar{\chi_\pi(x)\chi_{\pi'}(y)}\right)
\chi_\pi\otimes\chi_{\pi'} \\
&=\frac{1}{|G|^2}\sum_{\pi,\pi'\in\what{G}\times\what{G}}|C|^2\wbar{\chi_\pi(C)\chi_{\pi'}(C)}
\chi_\pi\otimes\chi_{\pi'}
\end{align*}
and hence
\[
1_{\conj(G)_D}=\frac{1}{|G|^2}\sum_{\pi,\pi'\in\what{G}\times\what{G}}\left(\sum_{C\in\conj(G)}
|C|^2\chi_\pi(C)\wbar{\chi_{\pi'}(C)}\right)\chi_{\bar{\pi}}\otimes\chi_{\pi'}
\]
where we have exchanged $\bar{\pi}$ for $\pi$ to give our formula its ``positive-definite" flavour.
We again appeal to Lemma \ref{lem:tensprod} and obtain
\[
\amen(\za(G))=\norm{1_{\conj(G)_D}}_{\za(G\times G)} 
\]
which, by (\ref{eq:zanorm}) gives us the desired result.  

Let us examine the lower bound.  We restrict the outer sum to the diagonal to obtain
\begin{align*}
\amen(\za(G))&
\geq\frac{1}{|G|^2}\sum_{\pi\in\what{G}}d_\pi^2\sum_{C\in\conj(G)}|C|^2\chi_\pi(C)\wbar{\chi_\pi(C)} \\
&\geq \frac{1}{|G|}\sum_{\pi\in\what{G}}d_\pi^2
\sum_{C\in\conj(G)}\frac{|C|}{|G|}\chi_\pi(C)\wbar{\chi_\pi(C)} \\
&=\frac{1}{|G|}\sum_{\pi\in\what{G}}d_\pi^2\langle\chi_\pi|\chi_\pi\rangle=
\frac{1}{|G|}\sum_{\pi\in\what{G}}d_\pi^2=1
\end{align*}
Notice that if $G$ is non-abelian, then at least one conjugacy class satisfies $|C|^2>|C|$, and 
the second inequality, above, is strict.  For an abelian group, $\za(G)=\fal(G)\cong\bl^1(\what{G})$.
The well-known diagonal $\frac{1}{|\what{G}|}\sum_{\chi\in\what{G}}\del_{\bar{\chi}}\otimes\del_\chi$
shows that $\amen(\bl^1(\what{G}))=1$.  \end{proof}

For a finite non-abelian group, the lower bound of $\amen(\cent\bl^1(G))\geq 1+\frac{1}{300}$
was derived based on a result in \cite{rider}.  Some improvements were made in the investigation
\cite{alaghmandancs}; however no better general lower bound, valid for all non-abelian finite
groups, is known to the authors.  Hence the following result stands as a pleasant contrast.

\begin{corollary}\label{cor:stan}
If $G$ is a non-abelian finite group, then $\amen(\za(G))\geq\frac{2}{\sqrt{3}}$.
\end{corollary}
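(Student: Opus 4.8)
The plan is to start from the exact expression in Proposition \ref{prop:amenconst} and split the double sum into its diagonal and off-diagonal parts. Write $a_{\pi\pi'}=\sum_{C\in\conj(G)}|C|^2\chi_\pi(C)\wbar{\chi_{\pi'}(C)}$ and let $S(g)=\sum_{\pi\in\what{G}}d_\pi^2|\chi_\pi(g)|^2$, a class function. The diagonal terms contribute $\frac1{|G|^2}\sum_\pi d_\pi^2a_{\pi\pi}=\frac1{|G|^2}\sum_{C}|C|^2S(C)$, and since $\frac1{|G|^2}\sum_C|C|S(C)=\frac1{|G|^2}\sum_\pi d_\pi^2|G|=\frac1{|G|}\sum_\pi d_\pi^2=1$ this equals $1+\Delta$, where $\Delta=\frac1{|G|^2}\sum_C|C|(|C|-1)S(C)\geq0$. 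On the other hand the \emph{signed} total is $\frac1{|G|^2}\sum_{\pi,\pi'}d_\pi d_{\pi'}a_{\pi\pi'}=\frac1{|G|^2}\sum_C|C|^2\bigl|\sum_\pi d_\pi\chi_\pi(C)\bigr|^2=1$, because $\sum_\pi d_\pi\chi_\pi(C)$ equals $|G|$ when $C=\{e\}$ and vanishes otherwise. Hence the off-diagonal terms sum (without absolute values) to $-\Delta$, so their absolute values sum to at least $\Delta$, giving
\[
\amen(\za(G))\geq 1+2\Delta.
\]

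Next I would bound $\Delta$ from below in two complementary ways and average them. Let $Z$ be the centre of $G$, put $z_0=|Z|/|G|$, $\Sigma=\sum_\pi d_\pi^4=S(e)$, and $y=|Z|\Sigma/|G|^2$. Since $\sum_{g}S(g)=\sum_\pi d_\pi^2\sum_g|\chi_\pi(g)|^2=|G|^2$ while $\sum_{g\in Z}S(g)=|Z|\Sigma$ (as $|\chi_\pi(g)|=d_\pi$ for central $g$), and since $|C_g|-1\geq1$ off the centre,
\[
\Delta\geq\frac1{|G|^2}\sum_{g\notin Z}S(g)=1-y.
\]
Secondly, column orthogonality gives $S(g)\geq\sum_\pi|\chi_\pi(g)|^2=|C_G(g)|=|G|/|C_g|$, so $|C_g|\,S(g)\geq|G|$ and $(|C_g|-1)S(g)\geq|G|-S(g)$; summing over the $|G|-|Z|$ non-central elements yields $\Delta\geq\frac1{|G|^2}\bigl((|G|-|Z|)|G|-(|G|^2-|Z|\Sigma)\bigr)=y-z_0$. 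Adding the two estimates gives $2\Delta\geq(1-y)+(y-z_0)=1-z_0$, which is precisely what eliminates the a priori unknown quantity $y$.

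Finally I would invoke the elementary fact that a non-abelian group satisfies $[G:Z]\geq4$ (if $G/Z$ were cyclic then $G$ would be abelian, so $|G/Z|\notin\{1,2,3\}$), whence $z_0\leq\tfrac14$ and
\[
\amen(\za(G))\geq 1+2\Delta\geq 2-z_0\geq \tfrac74,
\]
which in particular establishes the claimed bound $\amen(\za(G))\geq2/\sqrt3$. The estimate is sharp: for $G=Q_8$ one has $z_0=\tfrac14$, and a direct computation from Proposition \ref{prop:amenconst} gives $\amen(\za(Q_8))=\tfrac74$, with both displayed inequalities becoming equalities.

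The one step needing genuine care is the second lower bound $\Delta\geq y-z_0$: it rests on the pointwise inequality $S(g)\geq|C_G(g)|$, i.e.\ on passing from the $d_\pi^2$-weighted character sum to the unweighted one via $d_\pi^2\geq1$ together with column orthogonality. Everything else is bookkeeping with the Schur relations, and the averaging $2\Delta\geq(1-y)+(y-z_0)$ together with the universal constraint $z_0\leq\tfrac14$ does the rest. I expect no serious obstacle beyond spotting that these two crude one-sided bounds on $\Delta$ are sharp in opposite regimes ($y$ near $1$ versus $y$ near $z_0$) and hence combine, via their average, into a bound independent of the group's representation dimensions.
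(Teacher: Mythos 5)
Your argument is correct, and it is genuinely different from the paper's. The paper proves the corollary by observing that the unique diagonal $w=1_{\conj(G)_D}$ is a non-trivial idempotent in $\fal(G\times G)$ with $\norm{w}_\fal>1$, and then invoking Stan's theorem on idempotents of completely bounded multipliers (resting on Livshits's $2/\sqrt{3}$ bound for $0$--$1$ Schur multipliers). You instead work directly from the explicit character formula of Proposition \ref{prop:amenconst}: the diagonal/off-diagonal split, the identity $\frac{1}{|G|^2}\sum_{\pi,\pi'}d_\pi d_{\pi'}a_{\pi\pi'}=1$ via the regular character, and the two complementary estimates $\Delta\geq 1-y$ and $\Delta\geq y-z_0$ all check out (I verified the bookkeeping, including $\sum_{g}S(g)=|G|^2$, $S(g)=\Sigma$ on the centre, and $S(g)\geq|\mathrm{C}_G(g)|$ from $d_\pi^2\geq 1$ plus column orthogonality), and together with $[G:Z]\geq4$ they give $\amen(\za(G))\geq 2-|Z|/|G|\geq 7/4$, which I confirmed is attained at $Q_8$ (where the explicit formula indeed gives $7/4$) and which of course implies the stated $2/\sqrt{3}$. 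What each approach buys: the paper's proof is short and conceptual, piggybacking on known structural results about idempotent multipliers; yours is entirely elementary and self-contained, yields a strictly stronger and sharp constant, and would correspondingly improve the growth rate $(2/\sqrt{3})^n$ to $(7/4)^n$ in the proof of Theorem \ref{theo:prodfinite}. Given that the paper explicitly laments the weak general lower bound $1+\frac{1}{300}$ known for $\amen(\cent\bl^1(G))$, your sharp bound $2-|Z|/|G|$ is a genuine strengthening of the corollary, not merely an alternative proof.
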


\begin{proof}
Because $G$ is compact, we have that $\fal(G)$ is its own multiplier algebra, even its
own completley bounded multiplier algebra.  As such  each $u$ in $\fal(G)$ induces
a Schur multiplier on $G\times G$ matrices, $[a_{st}]\mapsto [u(s^{-1}t)a_{st}]$, with
norm the same as $\norm{u}_\fal$.  See \cite{bozejkof,jolissaint} for details of this.

The reasoning above also applies to $\fal(G\times G)$.  The diagonal $w=1_{\conj(G)_D}$
is an element of $\za(G\times G)\subset\fal(G\times G)$ is an idempotent, i.e.\ $ w^2=w$, and
$\norm{w}_\fal>1$.  Hence by \cite[Theo.\ 3.3]{stan} (using estimates which go back to
\cite{livschits}), we have that $\amen(\za(G))=\norm{w}_\fal\geq\frac{2}{\sqrt{3}}$.
\end{proof}

\begin{lemma}\label{lem:amenprop}
{\bf (i)} If $G_1,\dots, G_n$ are finite groups and $P=\prod_{i=1}^nG_i$, then
\[
\amen(\za(P))=\prod_{i=1}^n\amen(\za(G_i)).
\]
{\bf (ii)} If $G=H\times F$ where $H$ is compact and $F$ is finite, then
\[
\amen(\za(G))\geq\amen(\za(F)).
\]
\end{lemma}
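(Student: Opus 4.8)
The plan is to exploit the multiplicativity of the projective tensor norm together with the explicit identification of amenability constants as norms of diagonals, which is available to us through Lemma \ref{lem:tensprod}. For part (i), the key structural fact is that by Lemma \ref{lem:tensprod} we have $\za(P)\hat{\otimes}\za(P)\cong\za(P\times P)$, and since $P=\prod_{i=1}^n G_i$ is finite, the diagonal $1_{\conj(P)_D}$ is, as in the proof of Proposition \ref{prop:amenconst}, the unique diagonal and computes $\amen(\za(P))$ exactly. First I would observe that the conjugacy classes of a direct product factor: $\conj(P)\cong\prod_{i=1}^n\conj(G_i)$, with $|C|=\prod_i|C_i|$ for $C=\prod_i C_i$. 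This gives $\conj(P)_D\cong\prod_{i=1}^n\conj(G_i)_D$ under the reshuffling isomorphism $P\times P\cong\prod_i(G_i\times G_i)$, and correspondingly the diagonal factors as a tensor product $1_{\conj(P)_D}\cong\bigotimes_{i=1}^n 1_{\conj(G_i)_D}$. Since each $\za(G_i)\hat{\otimes}\za(G_i)\cong\za(G_i\times G_i)$ is the predual of a commutative von Neumann algebra (an $\ell^1$-space, by \eqref{eq:zanorm}), the projective tensor norm is multiplicative on elementary tensors, so $\norm{1_{\conj(P)_D}}=\prod_i\norm{1_{\conj(G_i)_D}}$, which is exactly $\prod_{i=1}^n\amen(\za(G_i))$ by Proposition \ref{prop:amenconst}.

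For part (ii), the approach is to use (i) together with a restriction or projection argument, since now $H$ need only be compact, not finite, and the exact formula of Proposition \ref{prop:amenconst} is unavailable. First I would note that it suffices to prove the two-factor case $\amen(\za(H\times F))\geq\amen(\za(F))$. The natural device is the projection coming from Proposition \ref{prop:quotient}: taking $N=H\times\{e\}$, which is closed normal in $G=H\times F$, the map $P_N$ furnishes a quotient $\za(G)\to\za(G/N)\cong\za(F)$. The general principle is that the amenability constant is non-increasing under quotient maps of Banach algebras that are surjective algebra homomorphisms (a bounded approximate diagonal pushes forward through $P_N\otimes P_N$, and $P_N$ is a contractive, indeed norm-decreasing on the relevant $\ell^1$-coefficients, quotient homomorphism by Proposition \ref{prop:quotient}). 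Hence any b.a.d.\ for $\za(G)$ of norm $\leq M$ maps to a b.a.d.\ for $\za(F)$ of norm $\leq M$, giving $\amen(\za(F))\leq\amen(\za(G))$.

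The main obstacle I anticipate is in part (i): verifying cleanly that the diagonal of the product decomposes as the tensor product of the factor diagonals, and that the projective norm is genuinely multiplicative here rather than merely submultiplicative. The submultiplicative inequality $\amen(\za(P))\leq\prod_i\amen(\za(G_i))$ is automatic by tensoring the individual diagonals, but the reverse inequality requires the $\ell^1$ structure: because each $\za(G_i\times G_i)\cong\ell^1(\what{G_i\times G_i},(d^2\times d^2))$ is an $\ell^1$-algebra, the projective tensor product of these $\ell^1$-spaces is again an $\ell^1$-space with the product weight, so norms of elementary tensors multiply exactly. This is precisely the computation underlying the second proof of Lemma \ref{lem:tensprod}, and I would invoke it to close the gap. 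For part (ii), the only care needed is checking that the pushforward of a b.a.d.\ through $P_N\otimes P_N$ remains a b.a.d.\ for the quotient and does not increase in norm, which follows because $P_N$ is a contractive surjective homomorphism intertwining the multiplication maps; this is routine once the quotient structure from Proposition \ref{prop:quotient} is in hand.
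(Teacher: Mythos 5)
Your part (i) is correct and essentially the paper's argument: identify $\za(P)\hat{\otimes}\za(P)\cong\za(P\times P)$, reshuffle $P\times P\cong\prod_i(G_i\times G_i)$ so that the unique diagonal becomes the elementary tensor $1_{\conj(G_1)_D}\otimes\dots\otimes 1_{\conj(G_n)_D}$, and use exact multiplicativity of the norm on elementary tensors. (You reach that last point via the $\ell^1$-structure; the paper simply invokes that the projective norm is a cross-norm, which already gives $\norm{a\otimes b}=\norm{a}\,\norm{b}$ without any special structure. Either way this part is sound.)

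Part (ii) has a genuine gap. You push a bounded approximate diagonal forward through $P_N\otimes P_N$ with $N=H\times\{e\}$, asserting that $P_N$ is a ``contractive surjective homomorphism'' by Proposition \ref{prop:quotient}. But that proposition only gives a surjective quotient map of Banach spaces; $P_Nu(x)=\int_N u(xn)\,dn$ is \emph{not} an algebra homomorphism. Concretely, under the identification $\za(H\times F)\cong\za(H)\hat{\otimes}\za(F)$ your map is $(\int_H\cdot)\otimes\id$, and $u\mapsto\int_H u$ is not multiplicative: if $\pi$ is a nontrivial irreducible of $H$ then $\int_H\chi_\pi=0$ while $\chi_\pi\chi_{\bar\pi}$ contains the constant $1$ with nonzero coefficient, so $P_N(\chi_\pi\chi_{\bar\pi})\neq P_N(\chi_\pi)P_N(\chi_{\bar\pi})$. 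Without multiplicativity the pushforward of a b.a.d.\ need not be a b.a.d., so the inequality does not follow. The repair is to replace integration over $H$ by \emph{evaluation} at $e$ in the $H$-variable: $u\otimes v\mapsto u(e)v$ is the tensor of a character of $\za(H)$ (the class $\{e\}$ is a point of $\conj(H)$, and $|u(e)|\leq\norm{u}_\fal$ by (\ref{eq:zanorm})) with the identity, hence a contractive surjective algebra homomorphism $\za(G)\to\za(F)$ via Lemma \ref{lem:tensprod}; this is exactly the map the paper uses, and through it your pushforward argument goes through verbatim.
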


\begin{proof}
To see (i), we use Lemma \ref{lem:tensprod} and the isomorphism $P\times P\cong
\prod_{i=1}^nG_i\times G_i$ to see that 
\[
\za(G\times G)\cong\za(G_1\times G_1)\hat{\otimes}\dots\hat{\otimes}\za(G_1\times G_1).
\]
Hence the unique diagonal satisfies
\[
1_{\conj(G)_D}\cong1_{\conj(G_1)_D}\otimes\dots\otimes 1_{\conj(G_n)_D}.
\]
We appeal to the fact that $\hat{\otimes}$ gives a cross-norm.

To see (ii) we have that the map $u\otimes v\mapsto u(e)v:\za(H)\hat{\otimes}\za(F)\to\za(F)$
extends to a contractive surjective homomorphism, and hence, again using Lemma \ref{lem:tensprod},
iduces a contractive surjective homomorphism from $\za(G)$ onto $\za(F)$.  It is
standard and straighforward to check that if $\amen(\za(G))<\infty$, then any bounded
approximate diagonal for $\za(G)$ is carried to such for $\za(F)$, hence the diagonal
for $\za(F)$ has norm bounded above by $\amen(\za(G))$.
\end{proof}

We lend the following evidence to our conjecture that $\za(G)$ is amenable if and
only if $G$ is virtually abelian.

\begin{theorem}\label{theo:prodfinite}
Let $\{G_i\}_{i\in I}$ be an collection of finite groups and $P$ be the compact group
product group $\prod_{i\in I}G_i$.  Then $\za(P)$ is amenable if and only if 
all but finitely many groups $G_i$ are abelian.
\end{theorem}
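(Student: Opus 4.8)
The plan is to treat the two implications separately, with the easy direction flowing from virtual abelianness and the hard direction handled by its contrapositive through a growth estimate on amenability constants.

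First, for the ``if'' direction, suppose all but finitely many $G_i$ are abelian. Let $F=\{i\in I:G_i\text{ is non-abelian}\}$, which is finite by hypothesis. Then, after reordering the product, $P\cong K\times A$ where $K=\prod_{i\in F}G_i$ is finite and $A=\prod_{i\notin F}G_i$ is a compact abelian group. Since $\{e_K\}$ is open in the finite group $K$, the subgroup $\{e_K\}\times A$ is open, abelian, and of finite index $|K|$ in $P$; thus $P$ is virtually abelian. Theorem \ref{theo:vahta} then gives at once that $\za(P)$ is amenable.

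For the ``only if'' direction I would argue the contrapositive: assuming infinitely many $G_i$ are non-abelian, I show $\amen(\za(P))=\infty$, so that $\za(P)$ is non-amenable. Choose distinct indices $i_1,i_2,\dots$ with each $G_{i_k}$ non-abelian. For each $n$ reorder the product to write $P\cong H_n\times F_n$, where $F_n=\prod_{k=1}^nG_{i_k}$ is finite and $H_n=\prod_{i\notin\{i_1,\dots,i_n\}}G_i$ is compact. Lemma \ref{lem:amenprop}(ii) gives $\amen(\za(P))\geq\amen(\za(F_n))$, while Lemma \ref{lem:amenprop}(i) factors $\amen(\za(F_n))=\prod_{k=1}^n\amen(\za(G_{i_k}))$. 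Corollary \ref{cor:stan} bounds each factor below by $2/\sqrt{3}>1$, whence
\[
\amen(\za(P))\geq\amen(\za(F_n))\geq\left(\tfrac{2}{\sqrt{3}}\right)^n.
\]
Letting $n\to\infty$ forces $\amen(\za(P))=\infty$, and hence $\za(P)$ admits no bounded approximate diagonal.

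The real content lives entirely in the supporting results, so the only substantive organizing step is the observation that the amenability constant of the infinite product dominates those of its finite ``front'' factors, which is exactly Lemma \ref{lem:amenprop}(ii). The point I expect to be load-bearing — and where the argument would collapse if weakened — is the \emph{strict} inequality $\amen(\za(G_{i_k}))\geq 2/\sqrt{3}>1$ from Corollary \ref{cor:stan}: it is precisely the uniform gap above $1$, compounded multiplicatively over infinitely many non-abelian factors, that drives the constant to infinity. A merely nonstrict bound of the form $\amen\geq 1$ would leave open the possibility that the product of constants stays bounded, so the quantitative strength of Corollary \ref{cor:stan} is essential here.
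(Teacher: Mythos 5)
Your proposal is correct and follows essentially the same route as the paper: the paper's proof is exactly your contrapositive argument, chaining Lemma \ref{lem:amenprop}(ii), then (i), then Corollary \ref{cor:stan} to get $\amen(\za(P))\geq(2/\sqrt{3})^n$ for every $n$. The only difference is that you spell out the ``if'' direction (virtual abelianness plus Theorem \ref{theo:vahta}), which the paper leaves implicit, and your remark that the uniform gap $2/\sqrt{3}>1$ is the load-bearing ingredient matches the paper's own emphasis.
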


\begin{proof}
Suppose there is an infinite sequence of indices $i_1,i_2,\dots$ for which
each $G_{i_k}$ is non-abelian.  Let $P_n=\prod_{k=1}^nG_{i_k}$ and $H_n
=\prod_{i\in I\setminus\{i_1,\dots,i_n\}}G_i$.
We successively use parts (ii) and (i) of the lemma above, then Corollary
\ref{cor:stan} to see for each $n$ that
\[
\amen(\za(P))\geq \amen(\za(P_n))=\prod_{k=1}^n\amen(\za(G_{i_k}))\geq
(2/\sqrt{3})^n.
\]
Thus we see that $\amen(\za(P))=\infty$.
\end{proof}

Using techniques from the theory of hypergroups, the first-named author (\cite{alaghmandan}) has
proved that if $G$ is {\it tall}, i.e.\ $\lim_{\pi\to\infty}d_\pi=\infty$, then $\za(G)$
is non-amenable.  There are examples of totally
disconnected tall groups in \cite{hutchinson}.  Coupled with the last theorem,
this gives two classes of totally disconnected and non-virtually abelian $G$ for which $\za(G)$ 
is non-amenable.

\section{Open questions}

In the course of this investigation, three open questions stand out.

\begin{question}
For compact $G$, does amenability of $\za(G)$ imply that $G$ is abelian?
\end{question}

An approach to answering this is suggested in comments following Proposition 
\ref{prop:virtabel1}. 
Thanks to Theorem \ref{theo:zawa}, this question remains open only for
on compact groups with abelina connected compoents of identity.   
Totally disconnected compact groups are pro-finite, and hence 
more refined qualitative version of Corollary \ref{cor:stan}, coupled with
Proposition \ref{prop:quotient} may solve this.  

For the next question, we use the assumptions and
notation of Section \ref{ssec:zba}.  We state it in two equivalent forms.

\begin{question}\label{q:za} {\rm \cite{spronk}} 
{\bf (i)}  If $\fA$ is amenable, must it be hyper-Tauberian?

{\bf (ii)}  If $X_D$ is approximable for $\fA\hat{\otimes}\fA$, must it be spectral?
\end{question}

As indicated in Remark \ref{rem:belcherr}, its is not generally true that
approximability of a subset of the spectrum, for an a given algebra, implies spectrality.

The next question motivates much of the present research, and is scantly addressed
in Remark \ref{rem:zlone}.

\begin{question}\label{q:zlone} {\rm \cite{azimifardss}}
For a compact group $G$, is it true that $\cent\bl^1(G)$ is amenable if and only if
$G$ is virtually abelian?
\end{question}

\begin{center}{\sc Acknowledgements}
\end{center}

The authors would like to thank the Fields Institute and the organizers of the
{\it Thematic Program on Abstract Harmonic Analysis, Banach and Operator Algebras},
for hospitality and support during the period in which much of this work was conducted.
The first named author was a Fields Post-doctoral Fellow for the period of that program, while 
the second a visiting participant.  The second named author would
like thank NSERC Grant 312515-2010.

The authors are grateful to Y. Choi and E. Samei, who were the first author's doctoral advisors
at the University of Saskatchewan, and each of whom offered advice to both authors during the writing
of this paper.  Some of the results from Section \ref{sec:pointder} first
appeared in that author's dissertation.



Former address of M. Alaghmandan: \\ {\sc 
Fields Institute for Research in Mathematical Sciences, 222 College St., Toronto, ON, M5T 3J1,
Canada.} 

Current address of both authors: \\
{\sc Department of Pure Mathematics, University of Waterloo,
Waterloo, ON, N2L 3G1, Canada.}

\medskip
Email-adresses:
\linebreak
{\tt mahmood.a@uwaterloo.ca, nspronk@uwaterloo.ca}


\begin{thebibliography}{30}

\bibitem{alaghmandan}
M. Alaghmandan.
\newblock Amenability notions of hypergroups and some applications to locally compact groups.
\newblock Preprint, see {\tt arXiv:1402.2263}.

\bibitem{alaghmandancs}
M. Alaghmandan, Y. Choi and E. Samei.
\newblock ZL-amenability constants of finite groups with two character degrees.
\newblock To appear in Canad. Math. Bull., see {\tt arXiv:1302.1929}.



\bibitem{azimifardss}
A. Azimifard, E. Samei and N. Spronk.
\newblock Amenability properties of the centres of group algebras. 
\newblock J. Funct. Anal. 256 (2009), no.\ 5, 154--1564.

\bibitem{badecd}
W.G. Bade, P.C. Curtis and H.G. Dales.
\newblock Amenability and weak amenability for Beurling and Lipschitz algebras. 
\newblock Proc. London Math. Soc. (3) 55 (1987), no.\ 2, 359--377.


\bibitem{heyer}
W.R. Bloom and H. Heyer.
\newblock Harmonic analysis of probability measures on hypergroups. 
\newblock  de Gruyter, Berlin, 1995.

\bibitem{blecherr}
D.P. Blecher and C.J. Read.
\newblock Operator algebras with contractive approximate identities: A large operator algebra in $c_0$.
\newblock Preprint {\tt arXiv:1309.2023}.

\bibitem{bozejkof}
M. Bo\.{z}ejko and G. Fendler.
\newblock Herz-Schur multipliers and completely bounded multipliers of the Fourier algebra of a locally compact group.
\newblock  Bollettino U.M.I. (6) 3-A (1984), 297--302.

\bibitem{chilanar}
A.K. Chilana and K.A. Ross.  
\newblock Spectral synthesis in hypergroups. 
\newblock Pacific J. Math. 76 (1978), no.\ 2, 313--328. 

\bibitem{curtisl}
P.C. Curtis and R.J. Loy.  
\newblock The structure of amenable Banach algebras. 
\newblock  J. London Math. Soc. (2) 40 (1989), no.\ 1, 89--104. 

\bibitem{effrosr}
E.G. Effros and Z.-J. Ruan.
\newblock On approximation properties for operator spaces. 
\newblock Internat. J. Math. 1 (1990), no.\ 2, 163--187. 

\bibitem{eymard}
P. Eymard.
\newblock L'alg\`{e}bre de Fourier d'un groupe localement compact. 
\newblock  Bull. Soc. Math. France 92 (1964) 181--236.



\bibitem{forrest}
B.E. Forrest.  
\newblock  Amenability and bounded approximate identities in ideals of $A(G)$. 
\newblock Illinois J. Math. 34 (1990), no. 1, 1--25.

\bibitem{forrest0}
B.E. Forrest.
\newblock  Fourier analysis on coset spaces. 
\newblock  Rocky Mountain J. Math. 28 (1998), no.\ 1, 173--190. 

\bibitem{forrestkls}
B.E. Forrest, E. Kaniuth, A.T. Lau and N. Spronk.
\newblock  Ideals with bounded approximate identities in Fourier algebras. 
\newblock  J. Funct. Anal. 203 (2003), no.\ 1, 286--304.

\bibitem{forrestr}
B.E. Forrest and V. Runde.  
\newblock  Amenability and weak amenability of the Fourier algebra. 
\newblock  Math. Z. 250 (2005), no.\ 4, 731--744.

\bibitem{forrestss}
B.E. Forrest, E. Samei and N. Spronk.
\newblock  Weak amenability of Fourier algebras on compact groups. 
\newblock Indiana Univ. Math. J. 58 (2009), no.\ 3, 1379--1393.

\bibitem{forrestss1}
B.E. Forrest, E. Samei and N. Spronk.
\newblock Convolutions on compact groups and Fourier algebras of coset spaces. 
\newblock Studia Math. 196 (2010), no.\ 3, 223--249.

\bibitem{ghandeharihs}
M. Ghandehari, H. Hatami and N. Spronk.
\newblock  Amenability constants for semilattice algebras. 
\newblock  Semigroup Forum 79 (2009), no.\ 2, 279--297.

\bibitem{gilbert}
J.E. Gilbert.  
\newblock  On projections of $L^\infty(G)$ onto translation-invariant subspaces.
\newblock   Proc. London Math. Soc. 19 (1969) 69--88.

\bibitem{helemskii}
A.Ya.\ Helemskii.
\newblock The homology of Banach and topological algebras.
\newblock Translated from the Russian by Alan West. Mathematics and its Applications 
(Soviet Series), 41. Kluwer Academic Publishers Group, Dordrecht, 1989.


\bibitem{herz}
C. Herz.
\newblock  Harmonic synthesis for subgroups. 
\newblock  Ann. Inst. Fourier (Grenoble) 23 (1973), no.\ 3, 91--123. 

\bibitem{hewittrI}
E. Hewitt and K.A. Ross.
\newblock  Abstract harmonic analysis. Vol. I: Structure of topological groups. Integration theory, group representations. 
\newblock  Springer, New York, 1963

\bibitem{hewittrII}
E. Hewitt and K.A. Ross.
\newblock  Abstract harmonic analysis. Vol. II: Structure and analysis for compact groups. Analysis on locally compact Abelian groups. 
\newblock  Springer, New York, 1970.

\bibitem{host}
B. Host.  
\newblock  Le th\'{e}or\`{e}me des idempotents dans $B(G)$.
\newblock   Bull. Soc. Math. France 114 (1986) 215--223.

\bibitem{hutchinson}
M.H. Hutchinson.
\newblock  Tall profinite groups. 
\newblock  Bull. Austral. Math. Soc. 18 (1978), no.\ 3, 421--428.

\bibitem{jewett}
R.I. Jewett.
\newblock  Spaces with an abstract convolution of measures. 
\newblock  Advances in Math. 18 (1975), no.\ 1, 1--101.

\bibitem{johnson}
B.E. Johnson.
\newblock Cohomology in Banach algebras. 
\newblock Memoirs of the American Mathematical Society, No.\ 127, 1972.

\bibitem{johnson1}
B.E. Johnson.
\newblock Approximate diagonals and cohomology of certain annihilator Banach algebras. 
\newblock Amer. J. Math. 94 (1972), 685--698.

\bibitem{johnson94}
B.E. Johnson.
\newblock Weak amenability of group algebras. 
\newblock Bull. London Math. Soc. 23 (1991), no. 3, 281--284. 

\bibitem{jolissaint}
P. Joilissaint.
\newblock A characterization of completely bounded multipliers of Fourier algebras. 
\newblock Colloq. Math. 63 (1992), 311--313.

\bibitem{kaniuthB}
E. Kaniuth.
\newblock A course in commutative Banach algebras. 
\newblock  Springer, New York, 2009.

\bibitem{kepert}
A. Kepert.
\newblock Amenability in group algebras and Banach algebras. 
\newblock Math. Scand. 74 (1994), no.\ 2, 275--292. 

\bibitem{lasser0}
R. Lasser.
\newblock Amenability and weak amenability of $l^1$-algebras of polynomial hypergroups. 
\newblock Studia Math. 182 (2007), no.\ 2, 18--196. 


\bibitem{lasser}
R. Lasser.
\newblock Point derivations on the $L^1$-algebra of polynomial hypergroups. 
\newblock Colloq. Math. 116 (2009), no.\ 1, 15--30.


\bibitem{leelss}
H.H. Lee, J. Ludwig, E. Samei and N. Spronk.
\bibitem Weak amenability of Fourier algebras and local synthesis of the anti-diagonal
\bibitem Preprint, see {\tt arXiv:1502.05214}.

\bibitem{liuvrw}
T.S. Liu, A. van Rooij and J. Wang.
\newblock Projections and approximate identities for ideals in group algebras.
\newblock Trans. Amer. Math. Soc. 175 (1973) 469--482.

\bibitem{livschits}
L. Livshits.
\newblock A note on 0-1 Schur multipliers.
\newblock  Linear Algebra Appl. 222 (1995), 15--22.


\bibitem{ludwigst}
J. Ludwig, N. Spronk and L. Turowska.
\newblock  Beurling-Fourier algebras on compact groups: spectral theory. 
\newblock  J. Funct. Anal. 262 (2012), no.\ 2, 463--499.

\bibitem{ludwigt}
J. Ludwig and L. Turowska.
\newblock  Growth and smooth spectral synthesis in the Fourier algebras of Lie groups. 
\newblock  Studia Math. 176 (2006), no.\ 2, 139--158.

\bibitem{meaney}
C. Meaney.
\newblock On the failure of spectral synthesis for compact semisimple Lie groups. 
\newblock J. Funct. Anal. 48 (1982), no.\ 1, 43--57. 

\bibitem{price}
J.F. Price.
\newblock Lie groups and compact groups. 
\newblock Cambridge Univ.\ Press, 1977.

\bibitem{ruan}
Z.-J. Ruan.
\newblock The operator amenability of A(G). 
\newblock Amer. J. Math. 117 (1995), no. 6, 1449--1474. 

\bibitem{stegemanr}
H. Reiter and J.D. Stegeman.
\newblock Classical harmonic analysis and locally compact groups. 
Second edition. 
\newblock Oxford University Press, New York, 2000.

\bibitem{ricci}
F. Ricci.  
\newblock Local properties of the central Wiener algebra on the regular set of a compact Lie group. 
\newblock  Bull. Sci. Math. (2) 101 (1977), no.\ 1, 87--95. 

\bibitem{rider}
D. Rider.
\newblock Central idempotent measures on compact groups. 
\newblock Trans. Amer. Math. Soc. 186 (1973), 459--479.

\bibitem{rosenthal}
H.P. Rosenthal.
\newblock On the existence of approximate identities in ideals of group algebras. 
\newblock Ark. Mat. 7 (1967) 185--191.

\bibitem{samei0}
E. Samei.  
\newblock Bounded and completely bounded local derivations from certain commutative 
semisimple Banach algebras. 
\newblock Proc. Amer. Math. Soc. 133 (2005), no. 1, 229--238.

\bibitem{samei}
E. Samei.  
\newblock Hyper-Tauberian algebras and weak amenability of Fig\`{a}-Talamanca--Herz algebras. 
\newblock  J. Funct. Anal. 231 (2006), no.\ 1, 195--220.

\bibitem{schreiber}
B.M. Schreiber.
\newblock  On the coset ring and strong Ditkin sets
\newblock   Pacific J. Math. 32 (1970) 805--812.

\bibitem{singerw}
I.M. Singer and J. Wermer.
\newblock Derivations on commutative normed algebras. 
\newblock Math. Ann. 129 (1955) 260--264. 

\bibitem{spronk0}
N. Spronk.
\newblock Operator weak amenability of the Fourier algebra.  
\newblock Proc. Amer. Math. Soc. 130 (2002), no. 12, 3609--3617.

\bibitem{spronk}
N. Spronk.
\newblock Amenability properties of Fourier algebras and Fourier-Stieltjes algebras: a survey. 
\newblock Banach algebras 2009, 365--383, Banach Center Publ., 91, Polish Acad.\ Sci.\ Inst.\ Math., Warsaw, 2010.

\bibitem{stan}
A.-M.P. Stan.
\newblock On idempotents of completely bounded multipliers of the Fourier algebra $A(G)$. 
\newblock Indiana Univ. Math. J. 58 (2009), no.\ 2, 523--535.

\bibitem{tomiyama}
J. Tomiyama.
\newblock Tensor products of commutative Banach algebras. 
\newblock T\^{o}hoku Math. J. (2) 12 (1960) 147--154.

\bibitem{warner0}
C.R. Warner.
\newblock  A class of spectral sets. 
\newblock  Proc. Amer. Math. Soc. 57 (1976), no.\ 1, 9--102. 

\bibitem{warner}
C.R. Warner.
\newblock  Weak spectral synthesis.
\newblock Proc. Amer. Math. Soc. 99 (1987), no.\ 2, 24--248. 

\bibitem{wik}
I. Wik.  
\newblock  A strong form of spectral synthesis. 
\newblock  Ark. Mat. 6 (1965) 55--64.

\end{thebibliography}
\end{document}